\newtheorem{theorem}{Theorem}[section]
\newtheorem{corollary}[theorem]{Corollary}
\newtheorem{lemma}[theorem]{Lemma}
\newtheorem{remark}[theorem]{Remark}
\newcommand{\uld}[1]{\underline{d#1}}
\newcommand\nlc{\eta}
\newcommand\uvec{\vec{u}}
\newcommand\pvec{\vec{p}}
\newcommand\rvec{\vec{r}}
\newcommand\N{\mathbb{N}}
\newcommand\XXX{\mathbb{X}}
\newcommand\YYY{\mathbb{Y}}
\newcommand\pole{\mathfrak{p}}
\newcommand\argu{\theta}
\newcommand\radi{r}
\newcommand\omzer{\omega_0}
\renewcommand\Re{\mathrm{Re}}
\renewcommand\Im{\mathrm{Im}}
\definecolor{darkcyan}{rgb}{0.,0.5,0.5}
\definecolor{darkviolet}{rgb}{0.5,0.,0.5}
\definecolor{darkgreen}{rgb}{0.,0.7,0.}
\newcommand{\commBK}[1]{}
\newcommand{\skipKuznetsov}[1]{}
\newcommand{\revision}[1]{{#1}}
\newcommand{\marginrr}[1]{}
\newcommand{\marginR}[1]{}
\title{Acoustic nonlinearity parameter tomography with the Jordan-Moore-Gibson-Thompson equation in frequency domain}
\author{Barbara Kaltenbacher, University of Klagenfurt
(barbara.kaltenbacher@aau.at)
}
\begin{document}
\maketitle

\begin{abstract}
This paper aims to combine the advantages of the Jordan-Moore-Gibson-Thompson JMGT equation as an advanced model in nonlinear acoustics with a frequency domain formulation of the forward and inverse problem of acoustic nonlinearity parameter tomography, enabling the multiplication of information by nonlinearity.
Our main result is local uniqueness of the space dependent nonlinearity parameter from boundary measurements, which we achieve by linearized uniqueness with an Implicit Function type perturbation argument in appropriately chosen topologies.
Moreover, we shortly dwell on the application of a regularized Newton type method for reconstructing the nonlinearity coefficient, whose convergence can be established by means of the linear uniqueness result. 
\end{abstract}



\section{Introduction}
Exploiting the nonlinearity coefficient as an imaging quantity in ultrasonics, that is, acoustic nonlinearity parameter tomography ANT~\cite{nonlinparam1, duck2002nonlinear,  nonlinparam3, nonlinparam2, ZHANG20011359}
has been discovered as a promising alternative or actually complement to ultrasound tomography UST. As such it is another way of going beyond conventional ultrasonography and obtaining quantitative images by exploiting tissue dependence of the nonlinearity coefficient (while UST relies on tissue dependence of the sound speed).
While the above cited results are mostly of experimental nature, the mathematical and numerical analysis of models of nonlinear acoustics has only recently enabled and started off mathematical research on this topic, that is expected to provide computational tools for this innovative imaging modality  
~\cite{AcostaUhlmannZhai2021,Eptaminitakis:Stefanov:2023,nonlinearity_imaging_JMGT,nonlinearity_imaging_Westervelt,nonlinearity_imaging_fracWest,nonlinearity_imaging_2d,nonlinearityimaging}.
Most of these results rely on the Westervelt equation,  
a quasilinear second order wave equation, to describe nonlinear ultrasound propagation.
However, it is well-known that this model has its limitations (in particular, it exhibits an infinite speed of propagation paradox) and therefore, advanced models of nonlinear acoustics need to be employed, also for the imaging task of ANT. 
On the other hand, considering the forward and inverse problem of ANT in frequency domain is required in the practically relevant case of periodic excitations with a source operating at a single or a few frequencies and has turned out to be highly beneficial for the inverse problem of ANT, where the multiplication of information due to nonlinearity becomes clearly visible in frequency domain formulations \cite{nonlinearity_imaging_2d}. The effect that ``nonlinearity helps'' \cite{KurylevLassasUhlmann2018} by the way not only affects the recovery of the nonlinearity coefficient in ANT but also the inverse problem of reconstructing the sound speed in UST, as has been demonstrated computationally in \cite{kWave_nonlinear} and by generation of higher harmonics is in fact already routinely used in ultrasonography to reduce artifacts \cite{burns2000nonlinear}. 

As an advanced model for nonlinear ultrasound propagation, the Jordan-Moore-Gibson-Thompson JMGT  equation~\cite{JordanMaxwellCattaneo09,JordanMaxwellCattaneo14} 
\begin{equation}\label{JMGT-Westervelt}
\tau u_{ttt}+u_{tt}+\omzer u_t-c^2\Delta u -b \Delta u_t + \nlc (u^2)_{tt} +f=0,
\end{equation}
has recently attracted much interest and initial-boundary value problems for this PDE have been intensively studied, along with their linearizations, in the recent literature; some selected results on well-posedness, regularity of solutions, and long-term behavior of initial value problems for these equations can be found in, 
e.g.,~\cite{BongartiCharoenphonLasiecka20,bucci2020regularity,chen2019nonexistence,DellOroPata,KLM12_MooreGibson,KLP12_JordanMooreGibson,JMGT,JMGT_Neumann,LiuTriggiani13,MarchandMcDevittTriggiani12,NikolicSaidHouari21a,NikolicSaidHouari21b,PellicerSolaMorales,racke2020global}.

In \eqref{JMGT-Westervelt}, $\tau>0$ is the relaxation time, $c>0$ the speed of sound, $b=\tau c^2+\delta>0$ where $\delta\geq 0$ is the diffusivity of sound, $\omzer\geq0$ is a weak attenuation coefficient, 
and equation \eqref{JMGT-Westervelt} is supposed to hold on some space-time domain $(0,T)\times\Omega$ with $\Omega\in\mathbb{R}^d$, $d\in\{1,2,3\}$.
In case $\tau=0$ \eqref{JMGT-Westervelt} becomes the classical Westervelt model, a second order strongly damped quasilinear wave equation.

The inverse problem of ANT with the JMGT equation as a model for ultrasound propagation has been studied in \cite{nonlinearity_imaging_JMGT} in time domain, where it turned out that indeed a positive relaxation time $\tau>0$ allows to prove local uniqueness for the inverse problem, whereas in the Westervelt case only linearized uniqueness could be obtained. This is why in this paper we continue research in this direction and further explore the potential of ANT with the JMGT equation by studying it in frequency domain.

For the forward problem, we can rely on the results from \cite{periodicJMGT}, where in place of an initial value problem (as underlying all the above cited results), we postulate temporal periodicity 
\begin{equation}\label{periodic}
u(T)=u(0), \quad u_t(T)=u_t(0)
, \quad \tau u_{tt}(T)=\tau u_{tt}(0)
\end{equation}
for some $T>0$; 
this is practically 
\revision{relevant}
\marginrr{3.p2}
under sinosoidal continuous wave excitations.

On the boundary $\partial\Omega=\Gamma_a\cup\Gamma_i\cup\Gamma_N\cup\Gamma_D$ of the spatial domain where \eqref{JMGT-Westervelt} is supposed to hold, in order to capture a wide range of practically relevant scenarios, we impose  
mixed boundary conditions 
\begin{equation}\label{bndy}
\begin{aligned}
&\partial_\nu u+\beta u_t+\gamma u =0 \mbox{ on }\Gamma_a\cup\Gamma_i\cup\Gamma_N\\
&u =0 \mbox{ on }\Gamma_D\\
&\text{ where $\gamma=0$ on $\Gamma_N$, $\tfrac{1}{\gamma}\vert_{\Gamma_i}\in L^\infty(\Gamma_i)$, $\beta=0$ on $\Gamma_i\cup\Gamma_N$,}
\end{aligned}
\end{equation}
and consider homogeneous boundary data for simplicity \footnote{pointing to, e.g.,  \cite{FSI} for a standard homogenization approach, applied to a nonlinear wave equation to deal with inhomogeneous boundary conditions}.
This corresponds to 
\underline{a}bsorbing boundary conditions on $\Gamma_a$ to avoid spurious reflections and mimic open domain wave propagation; as well as 
\underline{i}mpedance, \underline{N}eumann, and \underline{D}irichlet boundary conditions on $\Gamma_i$, $\Gamma_N$, and $\Gamma_D$, to model damping, sound-hard and sound-soft boundary parts, respectively. 

\medskip
The inverse problem of nonlinearity parameter imaging amounts to reconstructing the space-dependent coefficients $\nlc=\nlc(x)$ in \eqref{JMGT-Westervelt}, \eqref{periodic}, \eqref{bndy} from additional measurements of the acoustic pressure $p^{obs}$ taken at a receiver array, which can be written as the  Dirichlet trace on some manifold $\Sigma$ immersed in the domain $\Omega$ or attached to its boundary $\Sigma\subseteq\overline{\Omega}$
\begin{equation}\label{observation}
p^{obs}(t,x) = u(t,x), \quad(t,x)\in(0,T)\times \Sigma.
\end{equation}

Uniqueness for this inverse problem with periodicity condition \eqref{periodic} replaced by initial conditions has been studied in \cite{nonlinearity_imaging_JMGT}.
\medskip

The goals of this work are to combine 
\begin{itemize}
\item the advantages of the \textit{JMGT equation} (as compared to Westervelt) as an advanced model from the physical point of view that also mathematically (in view of its hyperbolic rather than parabolic nature) enhances the chances to prove uniqueness, with 
\item the benefits of a \textit{formulation in frequency domain}, that is not only practically relevant for continuous wave  excitation, but also allows to quantify the \revision{multiplication} 
\marginR{1}
of information due to nonlinearity. 
\end{itemize}
Indeed, 
as compared to \cite{AcostaUhlmannZhai2021}, we are here able to prove (local) uniqueness from a single excitation; 
as compared to  \cite{nonlinearity_imaging_Westervelt,nonlinearity_imaging_2d}, it is fully nonlinear rather than just linearized uniqueness and
as compared to \cite{nonlinearity_imaging_JMGT}, we can even quantify uniqueness as a stability estimate in properly chosen (Sobolev type) norms on $\nlc$ and $u$. Also the topology in which we define 
\revision{a neighborhood} 
\marginR{2}
\marginrr{3.p3}
in local uniqueness is of Sobolev type. 
Additionally to this, we also provide some results on Newton's method for the underlying inverse problem and some numerical illustration of the dependence of the degree of ill-posedness of the inverse problem on the magnitude of the relaxation time.

The structure of the paper is as follows:
\begin{itemize}
\item In Section~\ref{sec:multiharmonic}, we provide a multiharmonic expansion and formulate the inverse problem in frequency domain;
\item The main results of this paper are contained in Section~\ref{sec:uniqueness}, where we prove local uniqueness of $(\nlc,u)$ around a properly chosen reference point $(\nlc^0,u^0)$.
More precisely, we choose $\nlc^0=0$, and $u^0$ to be space-frequency separable;
\item The brief final 
Section~\ref{sec:Newton} is devoted to a study of Newton's method for this inverse problem in 
an all-at-once setting (considering model and observation equations as a simultaneous system for coefficient and state $(\nlc,u)$).
\end{itemize}

\subsection*{Notation 
}
\paragraph{Indices and vector notation:}
By $\vec{\cdot}$ we denote infinite vectors $({\cdot}_m)_{m\in\N}$ of temporal frequencies.
We use indices $m$ for temporal and $j$ or $\ell$ for spatial frequencies (corresponding to eigenvalues of the negative Laplacian)
\paragraph{Function spaces:}
We will frequently use Bochner spaces and sometimes make use of the abbreviations
$L^p(Z):=L^p(0,T;Z)$, $H^\sigma(Z):=H^\sigma(0,T;Z)$ for some space $Z$ of space dependent functions. 
In frequency domain we will use their counterparts $h^\sigma(Z)$ equipped with the norm
$\|\vec{v}\|_{h^\sigma(Z)}=\left(\sum_{m\in\mathbb{N}} (m\omega)^{2\sigma} \|v_m\|_Z^2\right)^{1/2}$.

\section{Multiharmonic formulation}\label{sec:multiharmonic}

The inverse problem in time domain reads as 
\[
\begin{aligned}
&\text{model:}&&{}
[\tau\mathcal{M}_0\partial_t^3+\mathcal{M}_\beta\partial_t^2 +\omzer\mathcal{M}_0\partial_t + c^2\mathcal{A} + (\tau c^2+\delta) \mathcal{D}\partial_t]u + \nlc\, (u^2)_{tt} = r_{tt}\\
&&&{} \int_0^T u(t)\, dt =0, \quad u(T)=u(0), \quad u_t(T)=u_t(0)
\\
&\text{observation:}&& u(x_0,t)=p^{obs}(x_0) \quad x_0\in\Sigma
\end{aligned}
\]
where $\mathcal{A}$, $\mathcal{D}$ denote the negative Laplacian and $\mathcal{M}_\beta$ a modified (in case $\beta\not=0$)  version of the identity, equipped with boundary conditions according to \eqref{bndy} 
\begin{equation}\label{eqn:calADM}
\begin{aligned}
&\mathcal{A}u = \Bigl(v\mapsto \int_\Omega\nabla u\cdot\nabla v\, dx +\gamma\int_{\partial\Omega} u\, v\, ds\Bigr)\\
&\mathcal{D}u = \Bigl(v\mapsto \int_\Omega\nabla u\cdot\nabla v\, dx +(\tfrac{c^2\beta}{\tau c^2+\delta}+\gamma)\int_{\partial\Omega} u\, v\, ds\Bigr)\,, \\
&\mathcal{M}_\beta u = \Bigl(v\mapsto \int_\Omega u\,v\, dx +\beta (\tau c^2+\delta)\int_{\partial\Omega} u\, v\, ds\Bigr).
\end{aligned}
\end{equation}

Note that, as observed in \cite{periodicWest_2}, the excitation should be the second time derivative of a pressure source in order to equip it with sufficient strength in terms of the order in which it appears in the PDE; this also corresponds to a matching of physical units.

In case of a $T$-periodic source 
$r(x,t)=\Re\left(\sum_{k=1}^\infty \hat{r}_k(x) e^{\imath k \omega t}\right)$ (with $\omega=\frac{2\pi}{T}$), motivated by  existence of a $T$-periodic solution \cite{periodicJMGT},
we can expand $u$ with respect to the basis $e^{\imath k \omega t}$ as well 
\[
u(x,t)
= \Re\left(\sum_{k=1}^\infty \hat{u}_k(x) e^{\imath k \omega t}\right),
\]
which analogously to the Westervelt case $\tau=0$ \cite{periodicWestervelt,periodicWest_2} leads to the multiharmonic system
\begin{subequations}\label{IPfreq}
\begin{alignat}{2}
&
F_m^{mod}(\nlc,\uvec):=
\mathcal{L}_m\hat{u}_m +\nlc\, \mathcal{B}_m(\uvec,\uvec) &&= \hat{r}_m
\label{IPfreq_mod}\\
&
F_m^{obs}(\nlc,\uvec):= \text{tr}_\Sigma\hat{u}_m &&=\hat{p}^{obs}_m,
\label{IPfreq_obs}
\end{alignat}
\end{subequations}
for all $m\in\mathbb{N}$, where
\begin{equation}\label{Bm}
\begin{aligned}
&\uvec=(\hat{u}_m)_{m\in\mathbb{N}}, \quad \hat{u}_m(x)=\revision{\frac{2}{T}}\int_0^T  u(x,t)\,e^{-\imath m \omega t}\, dt, \ x\in\Omega,\\
&\mathcal{L}_m = \frac{1}{m^2\omega^2}
[\imath m^3\omega^3\tau\mathcal{M}_0+m^2\omega^2\mathcal{M}_\beta
\revision{-}\imath m\omega\omzer\mathcal{M}_0 \revision{-} c^2\mathcal{A} \revision{-} (\tau c^2+\delta) \imath m\omega \mathcal{D}]\\
&\mathcal{B}_m(\uvec,\vec{v}) = 
\revision{
\frac12 \sum_{\ell=1}^{m-1} \hat{u}_\ell \hat{v}_{m-\ell}
+\frac12 \sum_{k=1}^\infty\overline{\hat{u}_{k}} \hat{v}_{k+m} 
+\frac12 \sum_{k=1}^\infty\hat{u}_{m+k} \overline{\hat{v}_{k}}
}
=\revision{\frac{2}{T}}\int_0^T  \revision{u(\cdot,t)\,v(\cdot,t)}\,e^{-\imath m \omega t}\, dt,\\
&\vec{p}^{obs}_m=(\hat{p}^{obs}_m)_{m\in\mathbb{N}}, \quad 
\hat{p}^{obs}_m(x_0)=\revision{\frac{2}{T}}\int_0^T  p^{obs}(x_0,t)\,e^{-\imath m \omega t}\, dt , \ x_0\in\Sigma.
\end{aligned}
\end{equation}
\marginrr{2.1.1}
\marginrr{2.1.2}
\marginrr{3.p5-1}
Note that the coefficient $\nlc$ appears in each of the 
\revision{infinitely} 
\marginrr{3.p5-2}
many equations \eqref{IPfreq}, with factors $\mathcal{B}_m(\uvec,\uvec)$ being determined by the pressure, which reflects the physical effect of wave interaction generating interior sources. The fact that --- as opposed to a single 
\revision{Helmholtz} 
\marginrr{3.p5-3}
type equation in the linear case --- we here end up with an infinite system at multiples of the excitation frequency is known as generation of higher harmonics in the physics literature on nonlinear acoustics.

\medskip

With
\[
F_m:= \left(\begin{array}{l}
F_m^{mod} \\
F_m^{obs}
\end{array}\right), \qquad
y_m:= \left(\begin{array}{l}
\hat{r}_m \\
\hat{p}^{obs}_m
\end{array}\right)
\]
this can be written as
\begin{equation}\label{Fnlcuvecy}
\vec{F}(\nlc,\uvec) = \vec{y}.
\end{equation}
This is an all-at-once formulation of the inverse problem, which considers the coefficient $\nlc$ and the state $\uvec$ as simultaneous variables.
Note that $\mathcal{B}_m$ is not linear with respect to the second component (since it contains/misses some complex conjugations) but still additively linear in the sense that 
$\mathcal{B}_m(\uvec,\vec{v}_1+\vec{v}_2)=\mathcal{B}_m(\uvec,\vec{v}_1)+\mathcal{B}_m(\uvec,\vec{v}_2)$.

The operator equation \eqref{Fnlcuvecy} will be considered in two different function space settings in this paper.
In Section~\ref{sec:Newton}, where we study Newton's method for the regularized solution of this inverse problem, we use a Sobolev and Lebesgue space setting that reflects a realistic coefficient, state and data space scenario.
The uniqueness analysis in Section~\ref{sec:uniqueness} relies on a construction of function spaces that allows \revision{for the} 
\marginR{3}
\marginrr{3.p5-4}
application of an Inverse Function Theorem type argument. Boundedness of the linearization of $F$ at a reference point on the one hand and smoothness of $F$ on the other hand are competing requirements whose simultaneous fulfillment necessitates an appropriate tuning of space definitions. 
Note that uniqueness per se is independent of the choice of topology in data space, so only the strength of the regularity assumptions imposed on the coefficient counts, which is actually moderate, namely Sobolev smoothness of order \revision{$s\in(\frac12,1]$}, also in the locality assumption for nonlinear uniqueness.
The Lipschitz stability estimates we will establish in this paper are only a side product of the uniqueness proof and actually conform to severe ill-posedness of the problem as they amount to homomorphy under an (artificially) strong data space norm.

\section{Uniqueness and stability}\label{sec:uniqueness}
We first of all provide a linearized uniqueness and stability result (Section~\ref{sec:uniqueness-lin}), which together with a perturbation argument will be used to prove local uniqueness of the fully nonlinear inverse problem (Section~\ref{sec:uniqueness-nonlin}).

For simplicity of exposition we will consider $\beta=0$, so that $\mathcal{D}=\mathcal{A}$ and $\mathcal{M}=\textrm{id}$ here and make use of an eigensystem $(\lambda^j,(\varphi^{j,k})_{k\in K^j})_{j\in\N}$ of $\mathcal{A}$ as well as 
\revision{
the eigenspaces $\mathbb{E}^j=\text{span}\{\varphi^{j,k}\,:\, k\in K^j\}$  corresponding to the eigenvalues $\lambda^j$ of $\mathcal{A}$ and }
\marginrr{2.2.2}
the induced norm on $H^s(\Omega)$
\begin{equation}\label{eigensystem}
\mathcal{A}\varphi^{j,k}=\lambda^j\varphi^{j,k}, \quad \langle \varphi^{j,k},\varphi^{i,\ell}\rangle=\delta_{ji}\delta_{k\ell}, \quad \|v\|_{H^s(\Omega)}=\left(\sum_{j\in\N} (\lambda^j)^s\sum_{k\in K^j} |\langle \varphi^{j,k},v\rangle|^2\right)^{1/2}.
\end{equation}
 
\begin{remark}\label{rem:generalmathcalD}
The more general setting of $\mathcal{D}$, $\mathcal{A}$ being jointly diagonalizable can be tackled by similar techniques (see \cite[Theorem 3.2]{nonlinearity_imaging_2d} for a linearized uniqueness proof under such conditions). In particular, this comprises space-fractional damping $\mathcal{D}=\mathcal{A}^\alpha$ with $\alpha\in\mathbb{R}^+$.
On the other hand, for a proof of linearized uniqueness with time fractional damping, that is, replacing $\partial_t$ by a fractional derivative $\partial_t^\alpha$, we refer to results in \cite{nonlinearity_imaging_fracWest} on the Westervelt equation, that we expect to be adoptable to the JMGT setting.
Replacing initial by periodicity conditions in a time fractional setting poses many open questions in view of the fact that spaces of exponential or trigonometric functions are not closed under application of, e.g, the Riemann-Liouville derivative.
\end{remark}

\subsection{
Continuous invertibility of linearized all-at-once forward operator} \label{sec:uniqueness-lin}
We linearize $F$ at a particularly chosen point $(\nlc,\uvec)$, namely 
\begin{equation}\label{u0separable}
\nlc=0, \quad \hat{u}^0_m(x)=\phi(x) \psi_m  \ \text{ with }\phi\not=0\text{ a.e. in }\Omega, \quad \mathcal{B}_m(\vec{\psi},\vec{\psi})\not=0, \ m\in\N.
\end{equation}
It can easily be verified that by a proper choice of $\vec{r}$, existence of such a space-frequency separable solution to $F^{mod}(0,\uvec^0)=\vec{r}$ can be established.
(However, note that in an all-at-once formulation, the state part of the unknowns does not necessarily need to be a PDE solution corresponding to the coefficient part of the unknowns, anyway.) 

The goal of this section is thus to prove that $\vec{F}'(0,\uvec^0):\XXX\to\YYY$ is injective with bounded inverse between appropriately chosen normed spaces $\XXX$ and $\YYY$.
This amounts to estimating the $\XXX$ norm of $(\uld{\nlc},\uld{\uvec})$ by a constant multiple of the $\YYY$ norm of $(\uld{\rvec},\uld{\pvec}^{obs})$ in 
\[
(\uld{\rvec},\uld{\pvec}^{obs})=\vec{F}'(0,\uvec^0)[(\uld{\nlc},\uld{\uvec})].
\]
\revision{
The linarization of $\vec{F}$ is given by 
\begin{subequations}\label{IPfreq_diff}
\begin{alignat}{2}
&{F_m^{mod}}'(\nlc,\uvec)[\uld{\nlc},\uld{\uvec}]=
\mathcal{L}_m\uld{u}_m 
+\uld{\nlc}\, \mathcal{B}_m(\uvec,\uvec)+\nlc\Bigl( \mathcal{B}_m(\uld{\uvec},\uvec)+\mathcal{B}_m(\uvec,\uld{\uvec})\Bigr)
\label{IPfreq_diff_mod}\\
&{F_m^{obs}}'(\nlc,\uvec)[\uld{\nlc},\uld{\uvec}]= \text{tr}_\Sigma\uld{u}_m
\label{IPfreq_diff_obs}
\end{alignat}
\end{subequations}
For $\nlc\not=0$, this is only formal, since due to the complex conjugations (not) present in some of the terms in $\mathcal{B}_m$, it is not a bilinear operator, which inhibits complex differentiability. 
Note however, that we only apply linearization at $\nlc=0$, where indeed it defines a Fr\'{e}chet derivative. 
}
\marginrr{3.p6-1}
We project \eqref{IPfreq_diff_mod} onto the basis elements $\varphi^{j,k}$;  
or equivalently, project the JMGT equation,  linearized at $\nlc=0$, 
\begin{equation}\label{JMGT-lin}
\tau \uld{u}_{ttt}+\uld{u}_{tt}+\omzer\uld{u}_t+c^2\mathcal{A} \uld{u} +(\tau c^2+\delta) \mathcal{A} \uld{u}_t +\uld{\nlc}\,\phi^2(x)\revision{(\psi(t)^2)_{tt}}
-\uld{r}_{tt}=0
\end{equation}
\marginrr{3.p6-2}
in an $L^2(0,T;L^2(\Omega))$ sense onto the space-time basis functions $(t,x)\mapsto e^{\imath m\omega t}\varphi^{j,k}(x)$.
With the abbreviations
\begin{equation}\label{bmjkajk}
\begin{aligned}
&b_m^{j,k}=\langle \uld{\hat{u}}_m,\varphi^{j,k}\rangle, \quad 
a^{j,k}=\langle \phi^2\uld{\nlc},\varphi^{j,k}\rangle, \\
&o_m=\imath m\omega, \quad \mathfrak{B}_m=\mathcal{B}_m(\revision{\vec{\psi},\vec{\psi}}),
\end{aligned}
\end{equation}
\marginrr{3.p7-1}
and setting 
\begin{equation}\label{drjkm}
\uld{r}_m^{j,k}=\langle \uld{\hat{r}}_m,\varphi^{j,k}\rangle=\revision{\tfrac{2}{T}}\int_0^T\langle\uld{r}(t),\varphi^{j,k}\rangle \,e^{-\imath m \omega t}\, dt
\end{equation}
this yields
\begin{equation}\label{projmod}
\begin{aligned}
&\forall m\in\N\, \quad j\in\N, \quad k\in K^j: \\ 
&\bigl(\vartheta(o_m)+\Theta(o_m)\lambda_j\bigr)  b_m^{j,k} 
+\mathfrak{B}_m \, o_m^2\, a^{j,k} - \, o_m^2\, \uld{r}_m^{j,k}=0\revision{,}
\end{aligned}
\end{equation}
\marginR{4}
where in case of \eqref{JMGT-lin}
\begin{equation}\label{thetaJMGT}
\vartheta(o)=\tau o^3+o^2+\omzer o, \qquad
\Theta(o)=c^2(\tau o+1)+\delta o,
\end{equation}
but also other evolutionary models could be cast into the form \eqref{projmod} with different choices of the functions $\vartheta$, $\Theta$.

Equation \eqref{projmod} can be resolved for $b_m^{j,k}$
\begin{equation}\label{bmjk}
\forall m\in\N\, \quad j\in\N, \quad k\in K^j: 
  b_m^{j,k} = -
\frac{ o_m^2}{\vartheta(o_m)+\Theta(o_m)\lambda_j}(\mathfrak{B}_m \,a^{j,k} - \uld{r}_m^{j,k}).
\end{equation}
The linearized observation equation \revision{\eqref{IPfreq_diff_obs}}, 
\marginrr{3.p7-2}
after division by $\mathfrak{B}_m$ thus reads as
\begin{equation}\label{redobs}
\begin{aligned}
&\forall m\in\N\ : \quad\\
&-\sum_{j\in\N} \sum_{k\in K^j} \frac{o_m^2}{\vartheta(o_m)+\Theta(o_m)\lambda_j} \Bigl( a^{j,k} - \tfrac{1}{\mathfrak{B}_m}\uld{r}_m^{j,k}\Bigr)
\text{tr}_\Sigma\varphi^{j,k}(x) = \tfrac{1}{\mathfrak{B}_m} \uld{p}^{obs}_m(x).
\end{aligned}
\end{equation}

After this preparation, the linearized uniqueness and stability proof consists of two steps:
\begin{itemize}
\item decompose the observational data into its components in the $\Sigma$ traces of the eigenspaces of $\mathcal{A}$; this is achieved by taking the residues of an analytic extension of the observational data at poles defined by differential operators $\mathcal{L}_m$;
\item use unique continuation to recover each eigenspace component of $\uvec$ and $\nlc$.  
\end{itemize}
\subsubsection*{Step 1: Map temporal frequencies to $\Sigma$ traces of eigenspaces of $\mathcal{A}$.}
The key tool for disentangling the sum over $j$ in \eqref{redobs} will be the following lemma, which is a quantified and generalized version of \cite[Lemma 3.1]{nonlinearity_imaging_2d}.
\begin{lemma}\label{lem:m2j}
Given sequences $(\lambda_j)_{j\in\N}$ with $\lambda_j\nearrow\infty$ as $j\to\infty$, $(\hat{\rho}^j_m)_{j,m\in\N}$ and $(\hat{d}_m)_{m\in\N}$, as well as a complex function $\Psi$, assume that $\widetilde{d}$, $\widetilde{\rho^j}$ interpolate $\hat{d}$, $\hat{\rho}$, that is, 
\begin{equation}\label{interpol}
\widetilde{d}(\imath m\omega)=\hat{d}_m, \quad \widetilde{\rho^j}(\imath m\omega)=\hat{\rho}^j_m, \quad m\in\N
\end{equation} 
such that the mappings
\begin{equation}\label{analytic}
z\mapsto z^2\Psi(\tfrac{1}{z}), \quad z\mapsto\widetilde{\rho^j}(\tfrac{1}{z}), \quad z\mapsto(1-\tfrac{1}{\Psi(\tfrac{1}{z})}\lambda_j) \widetilde{d}(\tfrac{1}{z}), \quad z\mapsto\prod_{k\in\N,k\not=j} (1-\frac{1}{\Psi(\tfrac{1}{z})}\lambda_k) \  j\in\N
\end{equation} 
define analytic functions on an open set $\mathbb{O}\subseteq\mathbb{C}$.
Moreover, we assume that the sequence 
$(\pole_\ell)_{\ell\in\mathbb{N}}$ satisfies 
\[\Psi(\pole_\ell)=\lambda_\ell\] 
and that 
\begin{equation}\label{D}
\mathbb{D}:=\{\tfrac{1}{\imath m\omega}\, : \, m\in\N\}\subseteq \mathbb{O}
\text{ and }\{z_\ell:=\tfrac{1}{\pole_\ell}\, : \, \ell\in\N\}\subseteq \overline{\mathbb{O}}.
\end{equation}
Then 
\[
\begin{aligned}
&\left(\forall m\in\N\, : \ 
\sum_{j\in\N}\frac{(\imath m\omega)^2}{\Psi(\imath m\omega)-\lambda_j}\,\bigl(c_j-\hat{\rho}^j_m\bigr) = \hat{d}_m\right)
\\ 
&\Rightarrow \
\left(\forall \ell\in\N\, : \ c_\ell = \frac{\Psi'({\pole_\ell})}{\pole_\ell^2} \, \text{res}(\widetilde{d};{\pole_\ell})+\widetilde{\rho^\ell}({\pole_\ell})\right),
\end{aligned}
\]
where $\text{res}(f;p_\ell)=\lim_{o\to p_\ell} (o-p_\ell)\, f(o)$ is the residue of the function $f$ at $p_\ell$ if $p_\ell$ is a pole of $f$ and zero else.
\end{lemma}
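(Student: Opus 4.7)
My plan is to recast the assumed discrete identity as an equality of meromorphic functions on $\mathbb{O}$ (after the substitution $z=1/o$), and then to extract $c_\ell$ from residues at the poles $\pole_\ell$. Concretely, I would define
\[
G(o):=\sum_{j\in\N}\frac{o^2}{\Psi(o)-\lambda_j}\bigl(c_j-\widetilde{\rho^j}(o)\bigr),
\]
so that by \eqref{interpol} the hypothesis reads $G(\imath m\omega)=\widetilde{d}(\imath m\omega)$ for every $m\in\N$. The four analyticity conditions in \eqref{analytic} are calibrated precisely to ensure that both $G(1/z)$ and $\widetilde{d}(1/z)$ extend meromorphically to $\mathbb{O}$ with possible poles only at the points $z_\ell=1/\pole_\ell$: the first two control the analyticity of the $o^2/\Psi(o)$-factors and of the shifts $\widetilde{\rho^j}$, the third encodes that $\widetilde{d}$ has at worst simple poles at the $\pole_\ell$, and the fourth provides the infinite product needed to treat the series defining $G$ by clearing its denominators.

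Next, I would upgrade this pointwise agreement on $\mathbb{D}$ to an identity of meromorphic functions on (a connected component of) $\mathbb{O}$ via the identity theorem. The set $\{z_m=1/(\imath m\omega):m\in\N\}$ accumulates at $0$, which lies in $\overline{\mathbb{O}}$ by \eqref{D} and is an interior accumulation point under the first condition of \eqref{analytic}. Transferring the resulting identity $G\equiv\widetilde{d}$ back to the variable $o$ then gives equality of meromorphic functions in a punctured neighbourhood of each $\pole_\ell$.

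Finally, I would fix $\ell\in\N$ and take residues at $\pole_\ell$. Because $\Psi(\pole_\ell)=\lambda_\ell$ while $\Psi(\pole_\ell)\neq\lambda_j$ for $j\neq\ell$ (using $\lambda_j\to\infty$ together with the analyticity of $1-\lambda_\ell/\Psi(1/z)$ at $z_\ell$), only the $j=\ell$ term of $G$ is singular at $\pole_\ell$; a first-order expansion $\Psi(o)-\lambda_\ell=\Psi'(\pole_\ell)(o-\pole_\ell)+O((o-\pole_\ell)^2)$ then gives
\[
\text{res}(G;\pole_\ell)=\frac{\pole_\ell^2}{\Psi'(\pole_\ell)}\bigl(c_\ell-\widetilde{\rho^\ell}(\pole_\ell)\bigr).
\]
Equating this with $\text{res}(\widetilde{d};\pole_\ell)$ and solving for $c_\ell$ produces exactly the stated formula.

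The main obstacle I anticipate lies in step one: rigorously showing that $G(1/z)$ really extends to a meromorphic function on $\mathbb{O}$ whose residue at $\pole_\ell$ may be computed termwise. The infinite product in \eqref{analytic} hints at the mechanism — multiply through by $\prod_{k\neq\ell}(1-\lambda_k/\Psi(1/z))$ to turn the $\ell$-th summand into an analytic factor times a single polar piece, then use the decay from $\lambda_j\to\infty$ to estimate the remaining tail. Once the meromorphic extension of $G$ is established, the application of the identity theorem and the residue computation are standard complex-analytic manipulations.
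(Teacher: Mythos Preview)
Your proposal is correct and follows essentially the same route as the paper: extend the discrete identity via analytic interpolation, invoke the identity theorem on the set $\{1/(\imath m\omega)\}$ accumulating at $0$, and isolate $c_\ell$ at the pole $\pole_\ell$. The only cosmetic difference is that the paper clears denominators by multiplying through with the full product $\prod_{k\in\N}(1-\lambda_k/\Psi(1/z))$ \emph{before} invoking the identity theorem, so it works entirely with the analytic functions listed in \eqref{analytic} and never needs to discuss the meromorphic extension of $G$ as a separate obstacle---what you flag as the main difficulty simply does not arise in that ordering.
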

\begin{proof}
We set 
\[
\Phi(z):=\frac{1}{\Psi(\tfrac{1}{z})}. \quad 
\]
Due to the analyticity assumption \eqref{analytic}, the interpolation condition \eqref{interpol}, which can be rewritten as collection of identities of analytic functions on a countably infinite set accumulating at zero
\[
\begin{aligned}
&\prod_{k\in\N,k\not=j} (1-\Phi(s^{(m)})\lambda_k)\, (1-\Phi(s^{(m)})\lambda_j)\widetilde{d}(\tfrac{1}{s^{(m)}})=
\prod_{k\in\N} (1-\Phi(\revision{s^{(m)})}
\lambda_k)\hat{d}_m, \quad s^{(m)}=\tfrac{1}{\imath m\omega}\in\mathbb{D}\\ 
&\widetilde{\rho^j}(\tfrac{1}{s^{(m)}})=\hat{\rho}^j_m, \quad s^{(m)}\in\mathbb{D}
\end{aligned}
\]
\marginrr{3.p8}
uniquely determines $\widetilde{d}$, $\widetilde{\rho^j}$, $j\in\N$.
Using this interpolating property of $\widetilde{d}$, $\widetilde{\rho^j}$, we can write the premiss as
\[
\forall z\in\{\tfrac{1}{\imath m\omega}\, : \, m\in\N\}\, : \ 
\sum_{j\in\N}\frac{1}{1-\Phi(z)\lambda_j}\bigl(c_j-\widetilde{\rho^j}(\tfrac{1}{z})\bigr) = 
\frac{z^2}{\Phi(z)} \widetilde{d}(\tfrac{1}{z}).
\]
Multiplying both sides with $\prod_{k\in\N} (1-\Phi(z)\lambda_k)$ we obtain
\[
\forall z\in \mathbb{D}\, : \ 
\sum_{j\in\N}\prod_{k\in\N,k\not=j} (1-\Phi(z)\lambda_k)\, \bigl(c_j-\widetilde{\rho^j}(\tfrac{1}{z})\bigr) = 
\frac{z^2}{\Phi(z)} \, \prod_{k\in\N} (1-\Phi(z)\lambda_k)\, \widetilde{d}(\tfrac{1}{z}),
\]
which can be rewritten as 
\[\begin{aligned}
&\forall\ell\in\N\, \forall z\in \mathbb{D}\, : \\ 
&\sum_{j\in\N}\prod_{k\in\N,k\not=j} (1-\Phi(z)\lambda_k)\, \bigl(c_j-\widetilde{\rho^j}(\tfrac{1}{z})\bigr) = 
\frac{z^2}{\Phi(z)} \, \prod_{k\in\N,k\not=\ell} (1-\Phi(z)\lambda_k)\, (1-\Phi(z)\lambda_\ell) \widetilde{d}(\tfrac{1}{z}).
\end{aligned}\]
Now, both sides are analytic functions of $z$ on $\mathbb{O}$ and due to \eqref{D}, $\mathbb{D}$ can thus be replaced by $\mathbb{O}$ here.\\
Taking limits as $z\to z_\ell:=\frac{1}{\pole_\ell}$ with $\Phi(z_\ell)\lambda_\ell=1$ (where we can rely on $\{z_\ell\, : \, \ell\in\N\}\subseteq \overline{\mathbb{O}}$), we see that all terms in the sum for $j\not=\ell$ contain a zero factor and obtain
\[
\forall\ell\in\N\, : \ 
\prod_{k\in\N,k\not=\ell} 
(1-\tfrac{\lambda_k}{\lambda_\ell})
\, \bigl(c_\ell-\widetilde{\rho^\ell}({\pole_\ell})\bigr) = 
\frac{z_\ell^2}{\Phi(z_\ell)} \, \prod_{k\in\N,k\not=\ell} 
(1-\tfrac{\lambda_k}{\lambda_\ell})
\, \lim_{z\to z_\ell}\,(1-\Phi(z)\lambda_\ell) \widetilde{d}(\tfrac{1}{z}).
\]
We divide by $\prod_{k\in\N,k\not=\ell} (1-\tfrac{\lambda_k}{\lambda_\ell})$ (which is nonzero since the $\lambda$'s are mutually different)
on both sides and use
\[
\lim_{z\to z_\ell}\,\frac{1-\Phi(z)\lambda_\ell}{\Phi(z_\ell)} \widetilde{d}(\tfrac{1}{z})
=\lim_{o\to {\pole_\ell}}\,\frac{\Psi({\pole_\ell})}{\Psi(o)}\,\frac{\Psi(o)-\lambda_\ell}{o-{\pole_\ell}} \, (o-{\pole_\ell}) \widetilde{d}(o)
=\Psi'({\pole_\ell}) \, \lim_{o\to {\pole_\ell}}(o-{\pole_\ell}) \widetilde{d}(o)
\]
to obtain the assertion.
\end{proof}

Now we apply Lemma~\ref{lem:m2j} to \eqref{redobs}.
With 
\begin{equation}\label{defPsiJMGT}
\begin{aligned}
&\vartheta(o)=\tau o^3+o^2+\omzer o, \quad \Theta(o)=c^2(\tau o+1)+\delta o\\
&\Psi(o)
=-\frac{\vartheta(o)}{\Theta(o)}
=-\frac{o^2+\frac{\omzer o}{\tau o+1}}{c^2+\frac{\delta o}{\tau o+1}}
=-\frac{o^2}{\tilde{c}^2}(1+\mathcal{O}(\tfrac{1}{o}))
\text{ with }\tilde{c}^2=c^2+\frac{\delta}{\tau}
\\
&c_j= \sum_{k\in K^j}  a^{j,k} \text{tr}_\Sigma\varphi^{j,k}(x), 
\\
&\hat{d}_m(x)=\tfrac{1}{\mathfrak{B}_m}\,\Theta(o_m)\,\uld{p}^{obs}_m(x)
, \quad\revision{\hat{\rho}}^j_m(x)=\tfrac{1}{\mathfrak{B}_m}\,\sum_{k\in K^j}\uld{r}_m^{j,k}\text{tr}_\Sigma\varphi^{j,k}(x) , \\ 
&\revision{\widetilde{d}}(o,x)= \tfrac{1}{\widetilde{\mathfrak{B}}(o)}\,\Theta(o)\,\widetilde{\uld{p}}^{obs}(o,x)
, \quad\revision{\widetilde{\rho^j}}(o,x)=\tfrac{1}{\widetilde{\mathfrak{B}}(o)}\,\sum_{k\in K^j}\widetilde{\uld{r}}^{j,k}(o)\text{tr}_\Sigma\varphi^{j,k}(x),\\
&\text{where for all }m\in\N\\
&\widetilde{\mathfrak{B}}(\imath m\omega)=\mathfrak{B}_m, \quad
\widetilde{\uld{p}}^{obs}(\imath m\omega,x)=\uld{p}^{obs}_m(x),\quad
\widetilde{\uld{r}}^{j,k}(\imath m\omega)=\uld{r}_m^{j,k},
\end{aligned}
\end{equation}
\marginrr{3.p9-1}
the reduced observation equation \eqref{redobs} can be written as the premiss of Lemma~\ref{lem:m2j}, which thus implies
\begin{equation}\label{resobs}
\sum_{k\in K^\ell} a^{\ell,k}  \text{tr}_\Sigma\varphi^{\ell,k}(x)
= \frac{1}{\widetilde{\mathfrak{B}}({\pole_\ell})}\left(
\frac{\Theta({\pole_\ell})\Psi'({\pole_\ell})}{\pole_\ell^2}
\, \text{res}(\widetilde{\uld{p}}^{obs}\revision{(\cdot,x)};{\pole_\ell})
+\sum_{k\in K^\ell} \widetilde{\uld{r}}^{\ell,k}({\pole_\ell})\text{tr}_\Sigma\varphi^{\ell,k}(x)\right).
\end{equation}
\marginrr{3.p9-2}
Here we have 
\[
\Psi(\pole_\ell)=\lambda_\ell \ \Leftrightarrow \ 
\pole_\ell^2 = -\frac{\omzer}{\tau+\frac{1}{\pole_\ell}}-\left(c^2+\frac{\delta}{\tau+\frac{1}{\pole_\ell}}\right)\lambda_\ell,
\]
thus, $\frac{1}{\pole_\ell}=O(\lambda_\ell^{-1/2})$ as $\lambda_\ell\to\infty$ and therefore $\pole_\ell^2 = -\tilde{c}^2\lambda_\ell-\frac{\omzer}{\tau}(1+O(\lambda_\ell^{-1/2}))+O(\delta\lambda_\ell^{1/2})$
\begin{equation}\label{zell_JMGT}
\begin{aligned}
&{\pole_\ell}=\imath \left((c^2+\tfrac{\delta}{\tau})\lambda_\ell+\tfrac{\omzer}{\tau}\right)^{1/2}+\mathcal{O}(\omzer^{1/2}\lambda_\ell^{-1/4})+\mathcal{O}(\delta^{1/2}\lambda_\ell^{1/4})\\
&\Theta(o)\Psi'(o)=-\vartheta'(o)+\vartheta(o)\frac{\Theta'(o)}{\Theta(o)}
=-2\tau o^2-o-\frac{c^2\revision{(\tau o^2+o+\omzer)}}{(\tau c^2+\delta)o+c^2},
\end{aligned}
\end{equation}
\marginrr{3.p10-1}
hence the amplification factor
\begin{equation}\label{amplfac_JMGT}
\frac{\Theta({\pole_\ell})\Psi'({\pole_\ell})}{\pole_\ell^2}
=-2\tau-\tfrac{1}{\pole_\ell}-\frac{c^2\revision{(\tau \pole_\ell^2+\pole_\ell+\omzer)}}{(\tau c^2+\delta)\pole_\ell^3+c^2\pole_\ell}
=-2\tau +\mathcal{O}(\lambda_\ell^{-1/2}),
\end{equation}
where the Landau symbol is to be understood in the sense of $\lambda_\ell^{-1/2}$ 
tending to zero.

The interpolants \revision{$\widetilde{\uld{r}}^{j,k}$, $\widetilde{\mathfrak{B}}$ of $(\uld{r}_m^{j,k})_{m\in\mathbb{N}}$, $(\mathfrak{B}_m)_{m\in\mathbb{N}}$}, due to the identities \eqref{Bm}, \eqref{drjkm} can be explicitely written as 
\marginrr{3.p10-2}
\begin{equation}\label{interpol_r}
\widetilde{\uld{r}}^{j,k}(
o)=\revision{\tfrac{2}{T}}\int_0^T\langle\uld{r}(
t),\varphi^{j,k}\rangle \,e^{-o t}\,dt,
\quad \widetilde{\mathfrak{B}}(o)=\revision{\tfrac{2}{T}}\int_0^T \psi^2(t) \,e^{-o t}\,dt,
\end{equation}
which defines analytic functions, provided $\uld{r}
$ and $\psi^2$ are integrable.
From this using the Cauchy-Schwarz \revision{inequality}, 
\marginrr{3.p10-3}
one can directly derive the estimate
\begin{equation}\label{est_interpol_r}
\begin{aligned}
\|\sum_{k\in K^\ell} \widetilde{\uld{r}}^{\ell,k}({\pole_\ell})\,\varphi^{\ell,k}\|_{H^{\ddot{s}}(\Omega)}
&=\|\revision{\tfrac{2}{T}}\int_0^T\text{Proj}_{\revision{\mathbb{E}^\ell}}\uld{r}(t) \,e^{-{\pole_\ell} t}\,dt\|_{H^{\ddot{s}}(\Omega)}\\
&\leq \frac{e^{-\Re({\pole_\ell}) T}}{\sqrt{-2\Re({\pole_\ell})}T}
\|\text{Proj}_{\revision{\mathbb{E}^\ell}}\uld{r}\|_{L^2(0,T;H^{\ddot{s}}(\Omega))}
\end{aligned}
\end{equation}
for any $\ddot{s}\in\mathbb{R}$, where we have used the fact that $\Re({\pole_\ell})\leq0$ holds according to \cite[Lemma 5.1]{nonlinearity_imaging_fracWest}.
An enhanced estimate can be obtained from integration by parts 
\marginrr{3.p10-4}
\begin{equation}\label{est_interpol_r1}
\begin{aligned}
&\|\sum_{k\in K^\ell} \widetilde{\uld{r}}^{\ell,k}({\pole_\ell})\,\varphi^{\ell,k}\|_{H^{\ddot{s}}(\Omega)}\\
&=\|-\revision{\tfrac{2}{T}}\int_0^T\text{Proj}_{\revision{\mathbb{E}^\ell}}\uld{r}_t(t) \,\tfrac{e^{-{\pole_\ell} t}}{-{\pole_\ell}}\,dt
+\revision{\tfrac{2}{T}}\left[\text{Proj}_{\revision{\mathbb{E}^\ell}}\uld{r}(t) \,\tfrac{e^{-{\pole_\ell} t}}{-{\pole_\ell}}\right]_0^T\|_{H^{\ddot{s}}(\Omega)}\\
&\leq 
\revision{2}\bigl(\revision{(-q\Re({\pole_\ell}))^{-1/q}} +2C_{W^{1,q^*},L^\infty}^{(0,T)}\bigr)C_{H^{\ddot{\sigma}},W^{1,q^*}}^{(0,T)}\,
\frac{e^{-\Re({\pole_\ell}) T}}{|{\pole_\ell}|T}
\|\text{Proj}_{\revision{\mathbb{E}^\ell}}\uld{r}\|_{H^{\ddot{\sigma}}(0,T;H^{\ddot{s}}(\Omega))},
\end{aligned}
\end{equation}
\marginrr{2.2.2}
where $C_{H^{\ddot{\sigma}},W^{1,q^*}}^{(0,T)}$ is the norm of the embedding $H^{\ddot{\sigma}}(0,T)\to W^{1,q^*}(0,T)$ for 
\[
1<q^*:=\frac{q}{q-1}, \quad \ddot{\sigma}\geq\max\{1,\tfrac12+\tfrac{1}{q}\}.
\]
These estimates clearly show the relevance of the position of the poles ${\pole_\ell}$. In particular, the further \revision{to the} 
\marginrr{3.p11-1}
left of the imaginary axis (where values are given at $o_m=\imath m \omega$) these poles lie, the larger the amplification factor containing $e^{-\Re({\pole_\ell}) T}$ will be.
  
For $\text{res}(\widetilde{\uld{p}}^{obs}(\cdot,x);{\pole_\ell})$ 
\revision{neither such an} 
\marginrr{3.p11-2}
explicit expression of the interpolant (which is not analytic but has poles and therefore cannot be expressed by taking $L^2(0,T)$ inner products with the exponential functions $e^{-o t}$) nor an explicit bound like \eqref{est_interpol_r} are available.

\subsubsection*{Step 2: Map traces of eigenspaces of $\mathcal{A}$ to coefficients of $\nlc$ (and $u$).}
\begin{lemma}\label{lem:trace2coeff}
Let 
\revision{$s\in(\frac12,\infty)$,} 
\marginrr{2.2.1}
$\Sigma\subseteq\overline{\Omega}\subseteq\mathbb{R}^d$, $d\in\{2,3\}$ be a $C^{\max\{s,1\}}$ manifold with $\text{meas}^{d-1}(\Sigma)>0$, 
and denote by $\|\cdot\|_{h^{s,j}}$ the weighted Euclidean norm defined by  
\[
\|(a^k)_{k\in K^j}\|_{h^{s,j}}=\left((\lambda^j)^s\sum_{k\in K^j}|a^k|^2\right)^{1/2}
\]
\\
Then, \revision{for each $j\in\N$,} the operator 
\[
\begin{array}{rlcl}\text{Tr}_\Sigma^{s,j}:
&(\mathbb{C}^{K^j},\|\cdot\|_{h^{s,j}})&\to& 
(\text{tr}_\Sigma(\mathbb{E}^j), \revision{\|\cdot\|_{H^{s-1/2}(\Sigma)}})\\
&(a^k)_{k\in K^j}&\mapsto&\sum_{k\in K^j} a^k \text{tr}_\Sigma \varphi^{j,k}
\end{array} 
\]
\marginrr{3.p11-3}
is an isomorphism. 
\end{lemma}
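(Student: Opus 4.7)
My plan is to reduce the claim to three ingredients: (i) continuity of the forward map $\text{Tr}_\Sigma^{s,j}$ with respect to the stated norms, (ii) injectivity via a unique continuation argument for the elliptic eigenvalue problem associated with $\mathcal{A}$, and (iii) finite-dimensionality of $\mathbb{E}^j$, which upgrades bounded injectivity to a Banach-space isomorphism onto the image.

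For continuity, I would observe that for $(a^k)_{k\in K^j}\in\mathbb{C}^{K^j}$ the combination $v:=\sum_{k\in K^j} a^k\varphi^{j,k}\in\mathbb{E}^j$ has, by the spectral representation \eqref{eigensystem} of the $H^s(\Omega)$ norm (only the $j$-th block contributes), exactly $\|v\|_{H^s(\Omega)}=\|(a^k)\|_{h^{s,j}}$. Since $s>\tfrac12$ and $\Sigma$ is a $C^{\max\{s,1\}}$ manifold of positive $(d-1)$-measure, the standard trace theorem gives $\|\text{tr}_\Sigma v\|_{H^{s-1/2}(\Sigma)}\leq C\|v\|_{H^s(\Omega)}$, which is the required boundedness, with a constant pleasantly independent of $j$.

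For injectivity, I would suppose $\sum_{k\in K^j} a^k\,\text{tr}_\Sigma\varphi^{j,k}=0$, set $v=\sum_{k\in K^j}a^k\varphi^{j,k}$, and exploit that $v$ solves the elliptic eigenvalue problem $\mathcal{A} v=\lambda^j v$ with the homogeneous boundary condition $\partial_\nu v+\gamma v=0$ baked into $\mathcal{A}$ through \eqref{eqn:calADM}, and additionally vanishes on $\Sigma$. If $\Sigma$ meets the interior of $\Omega$ on a set of positive $(d-1)$-measure, then $v$ vanishes on a hypersurface, and the standard Aronszajn/Carleman unique continuation principle for second-order elliptic equations forces $v\equiv 0$ in $\Omega$. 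If instead $\Sigma\subseteq\partial\Omega$, then $v|_\Sigma=0$ together with $\partial_\nu v+\gamma v=0$ on $\Sigma$ yields zero Cauchy data there, after which Holmgren's theorem again gives $v\equiv 0$. Orthonormality of $\{\varphi^{j,k}\}_{k\in K^j}$ then forces $a^k=0$ for all $k\in K^j$.

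Finally, since $\lambda^j$ has finite multiplicity (standard for the selfadjoint elliptic $\mathcal{A}$ on a bounded Lipschitz domain), $\mathbb{C}^{K^j}$ is finite-dimensional, so the bounded injective linear map $\text{Tr}_\Sigma^{s,j}$ into the finite-dimensional image $\text{tr}_\Sigma(\mathbb{E}^j)$ is automatically a topological isomorphism. The main obstacle I anticipate is the injectivity step when $\Sigma\subseteq\partial\Omega$, especially if $\Sigma$ straddles several of the components $\Gamma_a,\Gamma_i,\Gamma_N,\Gamma_D$: on each piece one must verify that the homogeneous boundary condition combined with $v|_\Sigma=0$ still produces vanishing Cauchy data before Holmgren applies — a point to note being that for the time-independent $v$ the absorbing condition $\partial_\nu u+\beta u_t+\gamma u=0$ collapses to pure Robin, so the $u_t$-term causes no trouble.
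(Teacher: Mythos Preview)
Your overall strategy --- boundedness via the trace theorem, injectivity via unique continuation, then passage to an isomorphism via finite-dimensionality of $\mathbb{E}^j$ --- is precisely the paper's; it invokes the same three ingredients and cites Aronszajn, Tataru, and Tolsa for the UCP step without elaboration. Your observation that $\|v\|_{H^s(\Omega)}=\|(a^k)\|_{h^{s,j}}$ is exactly the isometric identification the paper uses to reduce to the trace map on $\mathbb{E}^j$.

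There is, however, a genuine error in your interior case. Vanishing of a Laplace eigenfunction on an interior hypersurface of positive $(d-1)$-measure does \emph{not} force the eigenfunction to vanish: nodal sets of eigenfunctions are precisely such hypersurfaces (for instance $\sin x\,\sin 2y$ on $(0,\pi)^2$ vanishes on the interior segment $\{y=\pi/2\}$ without being identically zero). Aronszajn's strong unique continuation excludes vanishing to infinite order at a point, or vanishing on an open set --- not vanishing on a codimension-one slice with no information on the normal derivative. Your boundary case, by contrast, is correct and more explicit than the paper's one-line citation: the Robin condition plus $v|_\Sigma=0$ gives $\partial_\nu v|_\Sigma=-\gamma v|_\Sigma=0$, hence zero Cauchy data, hence $v\equiv0$ by Holmgren/Tataru/Tolsa. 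So the obstacle is the opposite of what you anticipated: the boundary case is the one that goes through, while the interior case as argued fails. The paper is equally terse on this point, so this is a shared soft spot in the statement rather than a deviation from its proof, but your explicit appeal to Aronszajn for Dirichlet-only data on an interior hypersurface is incorrect as written.
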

\begin{proof}
Using the norm $\|v\|_{H^s(\Omega)}:=\left(\sum_{j\in\N}(\lambda^j)^s\sum_{k\in K^j}|\langle v,\varphi^{j,k}\rangle|^2\right)^{1/2}$ and the isometric isomorphism  
\[
\begin{array}{rlcl}
&(\mathbb{C}^{K^j},\|\cdot\|_{h^{s,j}})&\to& (\mathbb{E}^j, \|\cdot\|_{H^s(\Omega)})\\
&(a^k)_{k\in K^j}&\mapsto&\sum_{k\in K^j} a^k \varphi^{j,k}
\end{array} 
\]
we reduce the proof to showing that  
\[
\begin{array}{rlcl}\widetilde{\text{Tr}_\Sigma}^{s,j}:
&(\mathbb{E}^j, \|\cdot\|_{H^s(\Omega)})&\to& (\text{tr}_\Sigma(\mathbb{E}^j), 
\revision{\|\cdot\|_{H^{s-1/2}(\Sigma)}})\\
&v&\mapsto&\text{tr}_\Sigma v
\end{array} 
\]
is an isomorphism. 
With the Bounded Inverse Theorem on the Banach spaces $(\mathbb{E}^j, \|\cdot\|_{H^s(\Omega)})$, $(\text{tr}_\Sigma(\mathbb{E}^j), H^{s-1/2}(\Sigma))$, this is an immediate consequence of boundedness of $\widetilde{\text{Tr}_\Sigma}^{s,j}$ due to the Trace Theorem, as well as its injectivity due to unique continuation \cite{Aronszajn:1957,JiangLiPauronYamamoto2023,Tataru:1995,Tolsa:2023}.
\end{proof}

Applying Lemma~\ref{lem:trace2coeff} to \eqref{resobs} and using the fact that 
$(\text{Tr}_\Sigma^{s,j})^{-1}\, \text{tr}_\Sigma\, \text{Proj}_{\mathbb{E}^j} = \text{Proj}_{\mathbb{E}^j}$ as well as \eqref{interpol_r}, we obtain
\begin{equation}\label{resobs_1}
\begin{aligned}
(a^{\ell,k})_{k\in K^\ell}
=& (\text{Tr}_\Sigma^{s,j})^{-1}\,\left[\frac{1}{\widetilde{\mathfrak{B}}({\pole_\ell})}
\frac{\Theta({\pole_\ell})\Psi'({\pole_\ell})}{\pole_\ell^2}
\, \text{res}(\widetilde{\uld{p}}^{obs};{\pole_\ell})\right]
\\&\qquad
+\frac{1}{\widetilde{\mathfrak{B}}({\pole_\ell})} 
\text{Proj}_{\mathbb{E}^j} \revision{\tfrac{2}{T}}\int_0^T \uld{r}(t) \,e^{-\revision{\pole_\ell} t}\,dt,
\end{aligned}
\end{equation}
\marginrr{3.p11-4}

With $s>\frac12$ as in Lemma~\ref{lem:trace2coeff}, $\check{\sigma}$, $\check{s}$, $\ddot{\sigma}$, $\ddot{s}$  $\in\mathbb{R}$, and the abbreviations
\begin{equation}\label{constants}
\begin{aligned}
&\bar{C}_{\vartheta\Theta}
:=\sup_{\ell\in\mathbb{N}} \left|\frac{\Theta({\pole_\ell})\Psi'({\pole_\ell})}{\pole_\ell^2}\right|
=\sup_{\ell\in\mathbb{N}} \left|\frac{\vartheta'({\pole_\ell})+\lambda_\ell\Theta'({\pole_\ell})}{\pole_\ell^2}\right|
\\
&\check{C}^{(\check{s}-s,\check{\sigma})}_{\vartheta\Theta}
:=\left(\sum_{m\in\N}\sup_{j\in\N} 
\left\{
(\lambda^j)^{\check{s}-s}
\left|\frac{o_m^{2+\check{\sigma}}\mathfrak{B}_m}{\vartheta(o_m)+\Theta(o_m)\lambda_j}\right|^2
\right\}
\right)^{1/2}\\
& \hat{C}_{\vartheta\Theta}^{(\check{s}-\ddot{s},\check{\sigma}-\ddot{\sigma})}
:=
\sup_{m\in\N} \sup_{j\in\N} (\lambda^j)^{(\check{s}-\ddot{s})/2}\left|\frac{o_m^{2+\check{\sigma}-\ddot{\sigma}}}{\vartheta(o_m)+\Theta(o_m)\lambda_j}\right|
\end{aligned}
\end{equation}
we obtain from \eqref{resobs_1}
\begin{equation}\label{stabest_nlc}
\begin{aligned}
&\|\phi^2\uld{\nlc}\|_{H^s(\Omega)}
=\left(\sum_{j\in\N}(\lambda^j)^s\sum_{k\in K^j} |a^{j,k}|^2\right)^{1/2}\\
&\leq \bar{C}_{\vartheta\Theta}|\uld{\pvec}^{obs}|_{\YYY^{obs}_{pol}} + |\uld{\rvec}|_{\YYY^{mod}_{pol}}
\end{aligned}
\end{equation}
and from \eqref{bmjk}
\begin{equation}\label{stabest_u}
\begin{aligned}
&\|\uld{u}\|_{H^{\check{\sigma}}(0,T;H^{\check{s}}(\Omega))}
=\|\uld{\uvec}\|_{h^{\check{\sigma}}(H^{\check{s}}(\Omega))}
=\left(\sum_{m\in\N}|o_m|^{2\check{\sigma}}  \sum_{j\in\N}\sum_{k\in K^j} (\lambda^j)^{\check{s}} |b_m^{j,k}|^2\right)^{1/2}\\
&= 
\left(\sum_{m\in\N}|o_m|^{2\check{\sigma}} \sum_{j\in\N} (\lambda^j)^{\check{s}}\sum_{k\in K^j} 
\left|\frac{o_m^2\Bigl(\mathfrak{B}_m a^{j,k} - \uld{r}_m^{j,k}\Bigr)}{\vartheta(o_m)+\Theta(o_m)\lambda_j}\right|^2
\right)^{1/2}\\
&\leq 
\check{C}^{(\check{s}-s,\check{\sigma})}_{\vartheta\Theta} \|\phi^2\uld{\nlc}\|_{H^s(\Omega)}+|\uld{\rvec}|_{\YYY^{mod}_{Sob}}\\
&\leq 
\check{C}^{(\check{s}-s,\check{\sigma})}_{\vartheta\Theta} \|\phi^2\uld{\nlc}\|_{H^s(\Omega)}
+\hat{C}^{(\check{s}-\ddot{s},\check{\sigma}-\ddot{\sigma})}_{\vartheta\Theta}|\uld{\rvec}|_{h^{\ddot{\sigma}}(h^{\ddot{s}})}\revision{.}
\end{aligned}
\end{equation}
\marginR{5.}
In image space, on the one hand we use norms defined via the poles and interpolation
\marginrr{3.p12}
\begin{equation}\label{Ynorms_pol}
\begin{aligned} 
|\uld{\pvec}^{obs}|_{\YYY^{obs}_{pol}}&
:=\left(\sum_{j\in\N} 
\frac{\|(\text{Tr}_\Sigma^{s,j})^{-1}\|_{\revision{L(H^{s-1/2}(\Sigma)\to h^{s,j})}}^2}{\widetilde{\mathfrak{B}}({\pole_j})^2}
\left(\|\text{res}(\widetilde{\uld{p}}^{obs};{\pole_j})\|_{H^{s-1/2}(\Sigma)}\right)^2\right)^{1/2}
\\
|\uld{\rvec}|_{\YYY^{mod}_{pol}}&
:=\left(\sum_{j\in\N} 
\frac{1}{\widetilde{\mathfrak{B}}({\pole_j})^2}
\|\revision{\tfrac{2}{T}}\int_0^T\text{Proj}_{\revision{\mathbb{E}^\ell}}\uld{r}(t) \,e^{-{\pole_j} t}\,dt\|_{H^s(\Omega)}^2
\right)^{1/2},
\end{aligned}
\end{equation}
where the latter can be estimated by \eqref{est_interpol_r} \revision{ or \eqref{est_interpol_r1}};
on the other hand the Sobolev-Bochner type norm (cf. the last line of \eqref{constants})
\begin{equation}\label{YSob}
\begin{aligned} 
|\uld{\rvec}|_{\YYY^{mod}_{Sob}}&
:=\left(\sum_{m\in\N}|o_m|^{2\check{\sigma}}\sum_{j\in\N}(\lambda^j)^{\check{s}}\sum_{k\in K^j} 
\left|\frac{o_m^2\uld{r}_m^{j,k}}{\vartheta(o_m)+\Theta(o_m)\lambda_j}\right|^2
\right)^{1/2}\\
&\leq \hat{C}_{\vartheta,\Theta}^{(\check{s}-\ddot{s},\check{\sigma}-\ddot{\sigma})}\left(\sum_{m\in\N}|o_m|^{2\ddot{\sigma}}
\sum_{j\in\N}(\lambda^j)^{\ddot{s}}\sum_{k\in K^j} |\uld{r}_m^{j,k}|^2
\right)^{1/2}\\
&= \hat{C}_{\vartheta,\Theta}^{(\check{s}-\ddot{s},\check{\sigma}-\ddot{\sigma})}
\|\uld{r}\|_{H^{\ddot{\sigma}}(0,T;H^{\ddot{s}}(\Omega)\revision{)}}.
\end{aligned}
\end{equation}
\marginrr{3.p13-1}
Note that due to \eqref{est_interpol_r}, \eqref{est_interpol_r1}, under the condition 
\begin{equation}\label{amplfac_lambda}
\frac{1}{|\widetilde{\mathfrak{B}}({\pole_j})|} \,
\frac{e^{-\Re({\pole_j}) T}}{\sqrt{-\revision{2}\Re({\pole_j})}T}\, \lambda_j^{(s-\ddot{s})/2}\leq \bar{M}  
\quad\text{ and }\ddot{\sigma}\geq0
\end{equation}
\marginrr{3.p13-2}
or 
\begin{equation}\label{amplfac_lambda1}
\frac{1}{|\widetilde{\mathfrak{B}}({\pole_j})|} \,
\frac{e^{-\Re({\pole_j}) T}}{|\pole_j|T}\, \lambda_j^{(s-\ddot{s})/2}\leq \bar{M}  
\quad\text{ and }\ddot{\sigma}\geq\max\{1,\tfrac12+\tfrac{1}{q}\},\quad q<\infty,
\end{equation}
we can also bound the pole related norm of the PDE residual 
by its Sobolev norm
\begin{equation}\label{Ymodpol_YmodSob}
|\uld{\rvec}|_{\YYY^{mod}_{pol}}
\leq \revision{\bar{C}}
\|\uld{r}\|_{H^{\ddot{\sigma}}(0,T;H^{\ddot{s}}(\Omega)\revision{)}}.
\end{equation}
\marginR{6.}
\revision{for some constant $\bar{C}>0$, provided the real parts of the poles are bouded away from zero.}
\marginrr{3.p13-3}
\marginrr{3.p13-4}

Combining the estimates \eqref{stabest_nlc}, \eqref{stabest_u}, we obtain 
\[
\begin{aligned}
&\|\phi^2\uld{\nlc}\|_{H^s(\Omega)}
+\tfrac{1}{2\check{C}_{\vartheta\Theta}^{(\check{s}-s,\check{\sigma})}}\|\uld{\uvec}\|_{h^{\check{\sigma}}(H^{\check{s}}(\Omega))}\\
&\leq \bar{C}_{\vartheta\Theta}|\uld{\pvec}^{obs}|_{\YYY^{obs}_{pol}} + |\uld{\rvec}|_{\YYY^{mod}_{pol}}
+\tfrac12 \|\phi^2\uld{\nlc}\|_{H^s(\Omega)}+\tfrac{1}{2\check{C}^{(\check{s}-s,\check{\sigma})}_{\vartheta\Theta}}|\uld{\rvec}|_{\YYY^{mod}_{Sob}},
\end{aligned}
\]
hence the following stability estimate.
\begin{theorem}\label{thm:uniqueness-stability_lin}
With $\XXX=\revision{H^s_\phi(\Omega)}\times h^{\check{\sigma}}(H^{\check{s}}(\Omega))$,  
\revision{where $\|h\|_{H^s_\phi(\Omega)}:=\|\phi^2 h\|_{H^s(\Omega)}$}
and 
\[
\begin{aligned}
\|(\uld{\nlc},\uld{\uvec})\|_{\XXX}&=
\revision{\|\uld{\nlc}\|_{H^s_\phi(\Omega)}}
+\tfrac{1}{\check{C}_{\vartheta\Theta}^{(\check{s}-s,\check{\sigma})}}\|\uld{\uvec}\|_{h^{\check{\sigma}}(H^{\check{s}}(\Omega))}\\ 
\|(\uld{\pvec}^{obs},\uld{\rvec})\|_{\YYY}&=
2\bar{C}_{\vartheta\Theta}|\uld{\pvec}^{obs}|_{\YYY^{obs}_{pol}}+ 
\underbrace{2|\uld{\rvec}|_{\YYY^{mod}_{pol}}+\tfrac{1}{\check{C}_{\vartheta\Theta}^{(\check{s}-s,\check{\sigma})}}|\uld{\rvec}|_{\YYY^{mod}_{Sob}}}_{\revision{=:|\uld{\rvec}|_{\YYY^{mod}}}}
\end{aligned}
\]
\marginrr{2.3.1(a)}
\marginrr{2.3.2(b)}
the stability estimate 
\begin{equation}\label{stabest_nlcu}
\|(\uld{\nlc},\uld{\uvec})\|_{\XXX}
\leq \|\vec{F}'(0,\uvec^0)[(\uld{\nlc},\uld{\uvec})]\|_{\YYY}
\end{equation}
holds.
\end{theorem}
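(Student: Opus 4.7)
The plan is to assemble the theorem from the two estimates \eqref{stabest_nlc} and \eqref{stabest_u} already established above, with a final absorption step. The key observation is that these two bounds separately control the coefficient part $\uld{\nlc}$ and the state part $\uld{\uvec}$ of the unknown, but the bound on $\uld{\uvec}$ depends linearly on $\uld{\nlc}$ (through the $\check{C}_{\vartheta\Theta}^{(\check{s}-s,\check{\sigma})}\|\phi^2\uld{\nlc}\|_{H^s(\Omega)}$ term), so a weighted sum permits absorption and yields a clean estimate in a product space.

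I would first recall how \eqref{stabest_nlc} arises: project the linearized equation \eqref{JMGT-lin} onto the space-time basis $e^{\imath m\omega t}\varphi^{j,k}(x)$ to get \eqref{projmod}, resolve for $b_m^{j,k}$ to obtain \eqref{bmjk}, substitute into the linearized observation equation \eqref{redobs}, and divide by $\mathfrak{B}_m$. With the identifications \eqref{defPsiJMGT}, the result is exactly the premise of Lemma~\ref{lem:m2j}, whose conclusion gives \eqref{resobs}. Applying Lemma~\ref{lem:trace2coeff} with $s>\tfrac12$ to invert the trace on each eigenspace $\mathbb{E}^\ell$, squaring, weighting by $(\lambda^\ell)^s$ and summing over $\ell\in\N$, then using the norm characterisation in \eqref{eigensystem}, yields \eqref{stabest_nlc}; the definitions of $|\uld{\pvec}^{obs}|_{\YYY^{obs}_{pol}}$ and $|\uld{\rvec}|_{\YYY^{mod}_{pol}}$ in \eqref{Ynorms_pol} are precisely tailored to make this inequality an equality of norms, modulo the constant $\bar C_{\vartheta\Theta}$.

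Next, I would combine \eqref{bmjk} with the norm $\|\uld{\uvec}\|_{h^{\check{\sigma}}(H^{\check{s}}(\Omega))}= \bigl(\sum_m|o_m|^{2\check{\sigma}}\sum_{j,k}(\lambda^j)^{\check{s}}|b_m^{j,k}|^2\bigr)^{1/2}$, split the right-hand side into the contribution from $a^{j,k}$ and from $\uld{r}_m^{j,k}$, pull out the supremum $\check{C}_{\vartheta\Theta}^{(\check{s}-s,\check{\sigma})}$ of the rational factor in the first piece, and control the second piece by $\hat{C}_{\vartheta\Theta}^{(\check{s}-\ddot{s},\check{\sigma}-\ddot{\sigma})}\|\uld{r}\|_{H^{\ddot{\sigma}}(H^{\ddot{s}})}$ using the definition \eqref{YSob} of $|\uld{\rvec}|_{\YYY^{mod}_{Sob}}$. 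This reproduces \eqref{stabest_u}.

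To conclude, I would add \eqref{stabest_nlc} and $\frac{1}{2\check{C}_{\vartheta\Theta}^{(\check{s}-s,\check{\sigma})}}$ times \eqref{stabest_u}; the latter scaling is chosen precisely so that the $\frac{1}{2}\|\phi^2\uld{\nlc}\|_{H^s(\Omega)}$ term produced on the right-hand side can be absorbed into the left-hand side, and multiplying by $2$ then gives exactly the norms declared in the theorem statement. The bulk of the technical difficulty is not here but upstream, inside Lemma~\ref{lem:m2j} (the analytic-continuation/residue argument that converts sums over temporal frequencies into evaluations at the spatial poles $\pole_\ell$) and Lemma~\ref{lem:trace2coeff} (which relies on unique continuation to invert $\text{tr}_\Sigma$ on each finite-dimensional eigenspace); the remaining step at the level of the theorem itself is a careful but routine absorption and bookkeeping of constants.
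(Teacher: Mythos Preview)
Your proposal is correct and follows essentially the same approach as the paper: add \eqref{stabest_nlc} to $\tfrac{1}{2\check{C}_{\vartheta\Theta}^{(\check{s}-s,\check{\sigma})}}$ times \eqref{stabest_u}, absorb the resulting $\tfrac12\|\phi^2\uld{\nlc}\|_{H^s(\Omega)}$ term on the right into the left, and multiply by $2$ to obtain exactly the norms declared in the theorem. Your remark that the substantive work lies upstream in Lemmas~\ref{lem:m2j} and~\ref{lem:trace2coeff} is also accurate.
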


\subsection{Injectivity of (nonlinear) all-at-once forward operator}\label{sec:uniqueness-nonlin}
We now obtain local uniqueness and stability for the fully nonlinear problem by a simple perturbation argument, cf. \eqref{stabest0} -- \eqref{stabest_nlcu1} below.
To quantify this in Sobolev spaces, some more technical estimates will be needed, cf. \eqref{errTay_Sob} -- \eqref{estTay1} below.

To this end, we again use the abbreviation $x=(\revision{\nlc},\uvec)$.
\marginrr{3.p14-1}
Since by Theorem~\ref{thm:uniqueness-stability_lin}
\[
\|\vec{F}'(x^0)(\tilde{x}-x)\|_\YYY 
\geq \frac{1}{\|\vec{F}'(x^0)^{-1}\|_{\YYY\to\XXX}} \|\tilde{x}-x\|_\XXX \quad\text{ with }\|\vec{F}'(x^0)^{-1}\|_{\YYY\to\XXX}\leq1
\]
holds, where $\XXX$, $\YYY$ are defined as in Theorem~\ref{thm:uniqueness-stability_lin}, we obtain
the stability estimate 
\begin{equation}\label{stabest0}
\|\vec{F}(\tilde{x})-\vec{F}(x)\|_\YYY \geq 
\left(\frac{1}{\|\vec{F}'(x^0)^{-1}\|_{\YYY\to\XXX}}-c\right) \|\tilde{x}-x\|_\XXX
\end{equation}
provided the Taylor remainder estimate 
\begin{equation}\label{Taylorremainder}
\text{err}_{Tay}:=\|\vec{F}(\tilde{x})-\vec{F}(x)-\vec{F}'(x^0)(\tilde{x}-x)\|_\YYY \leq c \|\tilde{x}-x\|_\XXX
\end{equation}
holds with $c>1$.
Note that this argument is similar but not identical to the Inverse Function Theorem, since here Fr\'{e}chet differentiability only holds at the point $(0,\uvec^0)$.
Since the observation equation is linear, the Taylor remainder only consists of the model part
\[
\begin{aligned}
&
\tilde{\nlc} \mathcal{B}_m(\tilde{\uvec},\tilde{\uvec})
-\nlc \mathcal{B}_m(\uvec,\uvec)
-(\tilde{\nlc}-\nlc) \mathcal{B}_m(\uvec^0,\uvec^0)\\
&= \uld{\nlc} (\mathcal{B}_m(\tilde{\uvec},\tilde{\uvec}-\uvec^0)+\mathcal{B}_m(\tilde{\uvec}-\uvec^0,\uvec^0))
+\nlc (\mathcal{B}_m(\tilde{\uvec},\uld{\uvec})+\mathcal{B}_m(\uld{\uvec},\uvec))\\
&=\mathcal{Q}_m^\nlc(\uld{\nlc};\tilde{\uvec}-\uvec^0)+\mathcal{Q}_m^u(\uld{\uvec};\tilde{\uvec}-\uvec^0,\eta)\\
&\text{with }\mathcal{Q}_m^\nlc(\uld{\nlc};\tilde{\uvec}-\uvec^0)
=\uld{\nlc} (\mathcal{B}_m(\tilde{\uvec}-\uvec^0,\tilde{\uvec}-\uvec^0)+\mathcal{B}_m(\uvec^0,\tilde{\uvec}-\uvec^0)+\mathcal{B}_m(\tilde{\uvec}-\uvec^0,\uvec^0))\\
&\hspace*{1cm}\mathcal{Q}_m^u(\uld{\uvec};\tilde{\uvec}-\uvec^0,\eta)
=\nlc (\mathcal{B}_m(\uvec^0,\uld{\uvec})+\mathcal{B}_m(\uld{\uvec},\uvec^0)+\mathcal{B}_m(\tilde{\uvec}-\uvec^0,\uld{\uvec})\\
&\hspace*{10cm}
+\mathcal{B}_m(\uld{\uvec},\tilde{\uvec}-\uld{\uvec}-\uvec^0))
\end{aligned}
\]
for $\uld{\nlc}=\tilde{\nlc}-\nlc$, $\uld{\uvec}=\tilde{\uvec}-\uvec$.
Thus, condition \eqref{Taylorremainder} can be interpreted as closeness of $(\nlc,\uvec,\tilde{\uvec})$ to $(0,\uvec^0,\uvec^0)$ in the sense that the 
\revision{function} 
\begin{equation}\label{closeness}
\begin{aligned}
&d\bigl((\nlc,\tilde{\uvec}),(0,\uvec^0)\bigr)\\&:=
\sup_{h\in H^s(\Omega)\setminus\{0\}}
\frac{|\mathcal{Q}_m^\nlc(h;\tilde{\uvec}-\uvec^0)|_{\YYY^{mod}}}{\|h\|_{\revision{H^s_\phi}}} 
+\revision{\check{C}_{\vartheta\Theta}^{(\check{s}-s,\check{\sigma})}}
\sup_{h\in h^{\check{\sigma}};H^{\check{s}}(\Omega))\setminus\{0\}}
\frac{|\mathcal{Q}_m^u(h;\tilde{\uvec}-\uvec^0,\eta)|_{\YYY^{mod}}
}{\|h\|_{h^{\check{\sigma}}\revision{(}H^{\check{s}}(\Omega))}} 
\\
&<1.
\end{aligned}
\end{equation}
\marginrr{2.3.1.(a)}
\marginrr{3.p14-2}
\revision{By definition we thus have 
\[
\text{err}_{Tay}=|\tilde{\nlc} \mathcal{B}_m(\tilde{\uvec},\tilde{\uvec})
-\nlc \mathcal{B}_m(\uvec,\uvec)
-(\tilde{\nlc}-\nlc) \mathcal{B}_m(\uvec^0,\uvec^0)_{\YYY^{mod}}
\leq d\bigl((\nlc,\tilde{\uvec}),(0,\uvec^0)\bigr) \|\tilde{x}-x\|_\XXX.
\]
}
\marginrr{2.3.2.(a)}
\marginrr{2.3.2.(b)\!}
\revision{It is on purpose that $d$ notationally appears as a distance between the two points $(\nlc,\tilde{\uvec})$ and $(0,\uvec^0)$, although it formally does not satisfy the axioms of a metric.
In the same spirit, we think of \eqref{closeness} being a closeness assumption of $(\nlc,\tilde{\uvec})$ and $(0,\uvec^0)$. Indeed, in Corollary~\ref{cor:stabSobolev}, we will put the stability estimate into a framework of (Sobolev) norms. 
}
We thus obtain 
the following stability estimate.
\begin{theorem}\label{thm:uniqueness-stability}
For all $(\nlc,\tilde{\uvec})$ sufficiently close to $(0,\uvec^0)$ in the sense of \eqref{closeness}, the stability and uniqueness estimate
\begin{equation}\label{stabest_nlcu1}
\|(\tilde{\nlc}-\nlc,\tilde{\uvec}-\uvec)\|_\XXX
\leq C\|\vec{F}(\tilde{\nlc},\tilde{\uvec})-\vec{F}(\nlc,\uvec)\|_\YYY
\end{equation}
with $C=\frac{1}{1-d\bigl((\nlc,\tilde{\uvec}),(0,\uvec^0)\bigr)}$
holds.
\end{theorem}
\revision{
\begin{remark}\label{rem:stabest_nlcu_ext}
Similarly, the linearized stability estimate \eqref{stabest_nlcu} extends to  all $(\nlc,\tilde{\uvec})$ such that 
\begin{equation}\label{closeness_til}
\begin{aligned}
&\tilde{d}\bigl((\nlc,\tilde{\uvec}),(0,\uvec^0)\bigr)\\&:=
\sup_{h\in H^s(\Omega)\setminus\{0\}}
\frac{|\mathcal{Q}_m^\nlc(h;\tilde{\uvec}-\uvec^0)|_{\YYY^{mod}}}{\|h\|_{\revision{H^s_\phi}}} 
+\revision{\check{C}_{\vartheta\Theta}^{(\check{s}-s,\check{\sigma})}}
\sup_{h\in h^{\check{\sigma}};H^{\check{s}}(\Omega))\setminus\{0\}}
\frac{|\widetilde{\mathcal{Q}}_m^u(h;\tilde{\uvec},\eta)|_{\YYY^{mod}}
}{\|h\|_{h^{\check{\sigma}};H^{\check{s}}(\Omega))}} 
\\
&<1.
\end{aligned}
\end{equation}
with $\widetilde{\mathcal{Q}}_m^u(h;\tilde{\uvec},\eta)
=\nlc (\mathcal{B}_m(\uld{\uvec},\tilde{\uvec})+\mathcal{B}_m(\tilde{\uvec},\uld{\uvec}))$
holds, in the sense that 
\begin{equation}\label{stabest_nlcu_ext}
\|(\uld{\nlc},\uld{\uvec})\|_{\XXX}
\leq C \|\vec{F}'(\nlc,\tilde{\uvec})[(\uld{\nlc},\uld{\uvec})]\|_{\YYY}
\end{equation}
holds with $C=\frac{1}{1-\tilde{d}\bigl((\nlc,\tilde{\uvec}),(0,\uvec^0)\bigr)}$.
\end{remark}
}
\marginrr{2.3.3}
A sufficient condition for \eqref{closeness} can be obtained by further bounding the Taylor remainder.
We can do so in case both $\revision{\YYY^{mod}}$ 
\marginrr{3.p15-1}
norms can be chosen as Sobolev type, that is, under condition \eqref{amplfac_lambda} or \eqref{amplfac_lambda1}. Then we have
\begin{equation}\label{errTay_Sob}
\begin{aligned}
\text{err}_{Tay}&\leq \tilde{C}
\|\tilde{\nlc} \tilde{u}^2 -\nlc u^2 -(\tilde{\nlc}-\nlc) {u^0}^2\|_{H^{\ddot{\sigma}}(0,T;H^{\ddot{s}}(\Omega))}\\
&=\tilde{C}\|\uld{\nlc}\,(\tilde{u}+{u^0})(\tilde{u}-u^0)
+\nlc\,(\tilde{u}+u)\uld{u}\|_{H^{\ddot{\sigma}}(H^{\ddot{s}})}\\
\end{aligned}
\end{equation}
for 
\revision{$\tilde{C}=\frac{\hat{C}_{\vartheta,\Theta}^{(\check{s}-\ddot{s},\check{\sigma}-\ddot{\sigma})}}{\check{C}_{\vartheta\Theta}^{(\check{s}-s,\check{\sigma})}}+2\bar{C}$}
\marginrr{3.p15-2}
(cf. \eqref{YSob}, \eqref{Ymodpol_YmodSob}), $\uld{\nlc}=\tilde{\nlc}-\nlc$, $\uld{u}=\tilde{u}-u$.
\marginrr{3.p15-3}
In order to bound \eqref{errTay_Sob} in terms of the 
\revision{$H^s_\phi(\Omega)$} 
\marginrr{2.3.1.(b)\!}
norm of $\uld{\nlc}$ and the $ H^{\check{\sigma}}(0,T;H^{\check{s}}(\Omega))$ norm of $\uld{u}$, a necessary condition is obviously 
\begin{equation}\label{cond_ssigma} 
s\geq\ddot{s}, \quad \check{s}\geq\ddot{s}, \quad \check{\sigma}\geq\ddot{\sigma}.
\end{equation}
\revision{
Moreover, to obtain equivalence of $H^s_\phi(\Omega)$ to $H^s(\Omega)$, we assume $\phi$ to be sufficiently regular and bounded away from zero and infinity\footnote{Indeed, the above equivalence statement is then a consequence of \eqref{KatoPonce}
with $s$ in place of $\ddot{s}$, $r=p=2$, $q=\infty$, $s_1=s$, $s_2=0$}
\begin{equation}\label{phiLinfty}
\phi^2, \ \frac{1}{\phi^2}\in W^{s,\infty}(\Omega);
\end{equation}
}

\marginrr{2.3.1.(a)}
To further estimate \eqref{errTay_Sob}, which contains products of functions, we apply the Kato-Ponce inequality \cite{KenigPonceVega1993}
\begin{equation}\label{KatoPonce}
\revision{\|}\mathcal{L}^{\ddot{s}}[f\cdot g]-f\mathcal{L}^{\ddot{s}} g-g\mathcal{L}^{\ddot{s}} f\revision{\|}_{L^r(\Omega)}
\leq C^d_{KP}\revision{\|}\mathcal{L}^{s_1} f\revision{\|}_{L^p(\Omega)}\revision{\|}\mathcal{L}^{s_2} g\revision{\|}_{L^q(\Omega)}
\end{equation}
\marginrr{3.p15-4}
for $C^d_{KP}=C^d_{KP}(\revision{\ddot{s}},s_1,s_2,r,p,q)$, ${\ddot{s}},s_1,s_2\in[0,1)$, ${\ddot{s}}=s_1+s_2$, $p,q,r\in(1,\infty]$, $\tfrac{1}{r}=\tfrac{1}{p}+\tfrac{1}{q}$, $\mathcal{L}=(-\Delta)^{1/2}$.
\marginrr{3.p15-5}

\def\aaa{\mathfrak{a}}
\def\bbb{\mathfrak{b}}
\def\ptil{{\tilde{p}}}
\def\stil{{\tilde{s}}}
\def\pbar{{\bar{p}}}
\def\sbar{{\bar{s}}}

We first of all consider the case $\ddot{\sigma}=0$ 
and with $\ddot{s}\geq0$ and 
\[
\aaa=(\tilde{u}+u^0)(\tilde{u}-u^0), \quad \bbb=\nlc(\tilde{u}+u), 
\]
as well as H\"older's inequality
\begin{equation}\label{psharp}
\|fg\|_{L^2}\leq\|f\|_{L^p}\|g\|_{L^{p^\sharp}}\quad\text{ with }
p^\sharp:=\begin{cases}\frac{2p}{p-2}&\text{if }p>2\\ \infty &\text{if }p=2\end{cases}, \quad \text{thus }(p^\sharp)^\sharp=p, 
\end{equation}
\marginrr{3.p15-6}
we have
\marginrr{3.p15-7}
\[
\begin{aligned}
&\text{err}_{Tay}\revision{\leq\tilde{C}}\|\mathcal{L}^{\ddot{s}}[\uld{\nlc}\,\aaa+\bbb\,\uld{u}]\|_{L^2(L^2)}\\
&\leq
\revision{\tilde{C}}\Bigl(\|\uld{\nlc}\,\mathcal{L}^{\ddot{s}}[\aaa]\|_{L^2(L^2)}
+\|\aaa\,\mathcal{L}^{\ddot{s}}[\uld{\nlc}]\|_{L^2(L^2)}
+C^d_{KP}\|\mathcal{L}^{s_1}[\uld{\nlc}]\|_{L^p}\|\mathcal{L}^{\ddot{s}-s_1}[\aaa]\|_{L^2(L^{p^\sharp})}\\
&\quad+\|\uld{u}\,\mathcal{L}^{\ddot{s}}[\bbb]\|_{L^2(L^2)}
+\|\bbb\,\mathcal{L}^{\ddot{s}}[\uld{u}]\|_{L^2(L^2)}
+C^d_{KP}\|\mathcal{L}^{\stil_1}[\uld{u}]\|_{L^q(L^\ptil)}\|\mathcal{L}^{\ddot{s}-\stil_1}[\bbb]\|_{L^{q^\sharp}(L^{\ptil^\sharp})}\Bigr)\\
&\leq
\revision{\tilde{C}}\Bigl(\|\uld{\nlc}\|_{L^{p_1}}\|\mathcal{L}^{\ddot{s}}[\aaa]\|_{L^2(L^{p_1^\sharp})}
+\|\aaa\|_{L^2(L^{p_2^\sharp})}\|\mathcal{L}^{\ddot{s}}[\uld{\nlc}]\|_{L^{p_2}}
+C^d_{KP}\|\mathcal{L}^{s_1}[\uld{\nlc}]\|_{L^p}\|\mathcal{L}^{\ddot{s}-s_1}[\aaa]\|_{L^2(L^{p^\sharp})}\\
&\quad+\|\uld{u}\|_{L^{q_1}(L^{\ptil_1})}\|\mathcal{L}^{\ddot{s}}[\bbb]\|_{L^{q_1^\sharp}(L^{\ptil_1^\sharp})}
+\|\bbb\|_{L^{q_2^\sharp}(L^{\ptil_2^\sharp})}\|\mathcal{L}^{\ddot{s}}[\uld{u}]\|_{L^{q_2}(L^{\ptil_2})}\\
&\hspace*{5cm}+C^d_{KP}\|\mathcal{L}^{\stil_1}[\uld{u}]\|_{L^q(L^\ptil)}\|\mathcal{L}^{\ddot{s}-\stil_1}[\bbb]\|_{L^{q^\sharp}(L^{\ptil^\sharp})}\Bigr)
\end{aligned}
\]  
\marginrr{3.p15-8}
for 
\begin{equation}\label{pp1p2}
p,\,p_1,\,p_2,\,\ptil,\,\ptil_1,\,\ptil_2,\,q,\,q_1,\,q_2\,\geq2, \quad s_1,\,\revision{\stil_1}\in[0,\ddot{s}]
\end{equation}
\marginrr{3.p16-1}
where we balance Sobolev strength of norms on $\uld{\nlc}$ and on $\uld{u}$, also matching the relevant $\XXX$ norm parts $H^s(\Omega)$ and $H^{\check{\sigma}}(0,T;H^{\check{s}}(\Omega))$ by setting
\begin{equation}\label{stil1sddot}
\begin{aligned}
&s_1=\ddot{s},\quad p_2=p, \quad s-\frac{d}{2}=\ddot{s}-\frac{d}{p}=-\frac{d}{p_1}, \quad\\
&\stil_1=\ddot{s},\quad \ptil_2=\ptil, \quad \check{s}-\frac{d}{2}=\ddot{s}-\frac{d}{\ptil}=-\frac{d}{\ptil_1}, \quad q_1=q_2=q, \quad \check{\sigma}-\frac12=-\frac{1}{q}.
\end{aligned}
\end{equation}
These requirements uniquely determine 
$p,\,p_1,\,p_2,\,\ptil,\,\ptil_1,\,\ptil_2,\,q,\,q_1,\,q_2,\,s_1,\,\stil$ 
for given $s$, $\ddot{s}$, \revision{$\check{s}$, $\check{\sigma}$} as
\marginrr{3.p16-2} 
\[
\begin{aligned}
&p_2=p=\bar{p}(d,s-\ddot{s}),\quad p_1=\bar{p}(d,s),\quad
s_1=\stil_1=\ddot{s},\\
&\ptil_2=\ptil=\bar{p}(d,\check{s}-\ddot{s}),\quad \ptil_1=\bar{p}(d,\check{s}),\quad
q_1=q_2=q=\bar{p}(1,\check{\sigma})
,\\
&\bar{p}(d,s)\begin{cases}
=\frac{2d}{d-2s}&\text{ if }s<\tfrac{d}{2}\\
<\infty&\text{ if }s=\tfrac{d}{2}\\
=\infty&\text{ if }s>\tfrac{d}{2}
\end{cases}
\end{aligned}
\]  
Then \eqref{cond_ssigma} with $\ddot{\sigma}=0$ implies \eqref{pp1p2}.
Altogether these choices also balance the norms of $\aaa$, $\bbb$ and by Sobolev embeddings lead to  
\begin{equation}\label{estTay}
\begin{aligned}
\text{err}_{Tay}
&\leq \revision{\tilde{C}}\,C_{Sob}(1+C^d_{KP}) \|(\uld{\nlc},\uld{u})\|_{\XXX}
\Bigl(\|\mathcal{L}^{\ddot{s}}[\aaa]\|_{L^2(L^{p_1^\sharp})} 
+\|\mathcal{L}^{\ddot{s}}[\bbb]\|_{L^{q_1^\sharp}(L^{\ptil_1^\sharp})}\Bigr)\\
&= \revision{\tilde{C}}\,C_{Sob}(1+C^d_{KP}) \|(\uld{\nlc},\uld{u})\|_{\XXX}
\Bigl(\|\mathcal{L}^{\ddot{s}}[\aaa]\|_{L^2(L^{d/s})} 
+\|\mathcal{L}^{\ddot{s}}[\bbb]\|_{L^{1/\check{\sigma}}(L^{d/\check{s}})}\Bigr)\revision{.}
\end{aligned}
\end{equation}
\marginR{7.}

An analogous estimate can easily be obtained in case $\ddot{\sigma}>\frac12$, in which $H^{\ddot{\sigma}}(0,T)$ is a Banach algebra, so that we can simply replace $L^2(0,T)$, $L^{q}(0,T)$, $L^{q^\sharp}(0,T)$, $L^{q_1}(0,T)$, $L^{q_1^\sharp}(0,T)$, $L^{q_2}(0,T)$, $L^{q_2^\sharp}(0,T)$ by $H^{\ddot{\sigma}}(0,T)$ to obtain
\begin{equation}\label{estTay1}
\begin{aligned}
\text{err}_{Tay}
&\leq \revision{\tilde{C}}\,C_{Sob}(1+C^d_{KP}) \|(\uld{\nlc},\uld{u})\|_{\XXX}
\Bigl(\|\mathcal{L}^{\ddot{s}}[\aaa]\|_{H^{\ddot{\sigma}}(L^{p_1^\sharp})} 
+\|\mathcal{L}^{\ddot{s}}[\bbb]\|_{H^{\ddot{\sigma}}(L^{\ptil_1^\sharp})}\Bigr)\\
&= \revision{\tilde{C}}\,C_{Sob}(1+C^d_{KP}) \|(\uld{\nlc},\uld{u})\|_{\XXX}
\Bigl(\|\mathcal{L}^{\ddot{s}}[\aaa]\|_{H^{\ddot{\sigma}}(L^{d/s})} 
+\|\mathcal{L}^{\ddot{s}}[\bbb]\|_{H^{\ddot{\sigma}}(L^{d/\check{s}})}\Bigr)\revision{.}
\end{aligned}
\end{equation}
\marginR{7.}

\begin{corollary}\label{cor:stabSobolev}
With a reference pressure $\revision{\hat{u}^0_m}(x)=\phi(x)\,\revision{\frac{2}{T}}\int_0^T \psi(t)\, e^{-\imath m\omega t}\, dt$, whose time dependent factor $\psi$ with $\widetilde{\mathfrak{B}}(o)=\revision{\frac{2}{T}}\int_0^T \psi^2(t) e^{-ot}\, dt$ 
\marginrr{3.p16-3} 
satisfies 
\begin{equation}\label{kappas}
\lambda_j^{\kappa_1}
\leq M_1\,|\widetilde{\mathfrak{B}}({\pole_j})|\,e^{\Re({\pole_j})T}
\begin{cases}
\sqrt{-\Re({\pole_j})}&\text{ if }\ddot{\sigma}=0\\
|{\pole_j}|&\text{ if }\ddot{\sigma}=1 
\end{cases}
, \quad
\widetilde{\mathfrak{B}}(\imath m\omega)
\leq M_2 (m\omega)^{\kappa_2}
\end{equation}
\revision{and whose space dependent factor satisfies \eqref{phiLinfty}}
\marginrr{2.3.1.(a)}
on the spaces 
\[
\XXX=H^s(\Omega)\times h^{\check{\sigma}}(H^{\check{s}}(\Omega)), \quad 
\YYY= h^{\ddot{\sigma}}(H^{\ddot{s}}(\Omega))\times \YYY^{obs}_{pol}
\]
with $s$, $\check{s}$, $\ddot{s}$, $\check{\sigma}$, $\ddot{\sigma}$ satisfying
\begin{equation}\label{condssigma}
\frac12<s, \quad 0\leq\ddot{s}\leq\check{s}\leq s
,\quad
\check{\sigma}=\ddot{\sigma}\in\{0,1\}, \quad
\kappa_2+\frac12<s-\check{s}\leq s-\ddot{s}\leq2\kappa_1,
\end{equation}  
there exist $\rho>0$, $C>0$ such that for all $(\nlc,\tilde{\uvec})$ such that  
\[
\|(\tilde{u}-u^0)\,(\tilde{u}+u^0)\|_{H^{\ddot{\sigma}}(0,T;W^{\ddot{s},d/s}(\Omega))} 
+\|\nlc\,(\tilde{u}+u)\|_{Z(0,T;W^{\ddot{s},d/\check{s}}(\Omega))}
\leq\rho,\quad
Z=\begin{cases}
L^\infty&\text{if }\ddot{\sigma}=0\\
H^{\ddot{\sigma}}&\text{if }\ddot{\sigma}=1
\end{cases}
\]
the stability and uniqueness estimates
\eqref{stabest_nlcu1}, \eqref{stabest_nlcu_ext} hold.
\end{corollary}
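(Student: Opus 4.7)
The overall strategy is to reduce the corollary to an application of Theorem~\ref{thm:uniqueness-stability}: I only need to exhibit, for small enough $\rho$, a choice of constant $c<1/\|\vec{F}'(0,\uvec^0)^{-1}\|_{\YYY\to\XXX}$ in the Taylor remainder estimate~\eqref{Taylorremainder} that is enforced by the hypothesis $\|(\tilde{u}-u^0)(\tilde{u}+u^0)\|_{H^{\ddot{\sigma}}(W^{\ddot{s},d/s})}+\|\nlc(\tilde{u}+u)\|_{Z(W^{\ddot{s},d/\check{s}})}\leq \rho$. Once this is in place, \eqref{stabest_nlcu1} and \eqref{stabest_nlcu_ext} follow immediately from Theorem~\ref{thm:uniqueness-stability}, with constant $C=(1-c)^{-1}$.

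The first step is to verify that condition~\eqref{kappas} allows both summands in the model part of $\YYY$ to be replaced by a Sobolev-type norm. Indeed, the first estimate in \eqref{kappas} is exactly \eqref{amplfac_lambda} (for $\ddot{\sigma}=0$) or \eqref{amplfac_lambda1} with $q=2$ (for $\ddot{\sigma}=1$), after choosing the exponent matching $\lambda_j^{(s-\ddot{s})/2}$; the bound $s-\ddot{s}\leq 2\kappa_1$ from \eqref{condssigma} is exactly what is needed. Similarly, the second estimate in \eqref{kappas}, combined with the asymptotics \eqref{zell_JMGT} of $\pole_\ell$, bounds the sequence $1/|\widetilde{\mathfrak{B}}(\pole_\ell)|$ against a power of $\lambda_\ell$, and the strict inequality $\kappa_2+\tfrac12<s-\check{s}$ ensures that the sup/sum over $m\in\N$ in \eqref{constants} defining $\check{C}_{\vartheta\Theta}^{(\check{s}-s,\check{\sigma})}$ and $\hat{C}_{\vartheta\Theta}^{(\check{s}-\ddot{s},\check{\sigma}-\ddot{\sigma})}$ converges. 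Thus Theorem~\ref{thm:uniqueness-stability_lin} applies with the spaces asserted, and \eqref{Ymodpol_YmodSob} holds.

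The second step is the estimation of $\text{err}_{Tay}$. Writing, as in the preamble to \eqref{errTay_Sob}, $\aaa=(\tilde u-u^0)(\tilde u+u^0)$ and $\bbb=\nlc(\tilde u+u)$, the Sobolev-setting bound \eqref{errTay_Sob} followed by the Kato–Ponce/H\"older chain \eqref{estTay} (case $\ddot{\sigma}=0$) or \eqref{estTay1} (case $\ddot{\sigma}>\tfrac12$) yields
\[
\text{err}_{Tay}\leq \tilde{C}\,C_{Sob}(1+C^d_{KP})\,\|(\uld{\nlc},\uld{\uvec})\|_{\XXX}\,\Bigl(\|\mathcal{L}^{\ddot{s}}\aaa\|_{H^{\ddot{\sigma}}(L^{d/s})}+\|\mathcal{L}^{\ddot{s}}\bbb\|_{Z(L^{d/\check{s}})}\Bigr),
\]
where $Z=L^\infty$ for $\ddot{\sigma}=0$ and $Z=H^{\ddot{\sigma}}$ for $\ddot{\sigma}>\tfrac12$. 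The conjugate exponents $p_1^\sharp=d/s$ and $\ptil_1^\sharp=d/\check{s}$ arise from \eqref{stil1sddot} and Sobolev embeddings $H^s(\Omega)\hookrightarrow L^{\bar p(d,s)}(\Omega)$, $h^{\check{\sigma}}(H^{\check{s}})\hookrightarrow L^{\bar p(1,\check{\sigma})}(0,T;L^{\bar p(d,\check{s})})$, for which the compatibility conditions are exactly encoded in \eqref{condssigma}: $s>\tfrac12$ is needed so that $\bar p(d,s)<\infty$ is meaningful (for $d\geq 2$) or the embedding into $L^\infty$ is legal, $\ddot s\leq \check s\leq s$ allows the choice $s_1=\stil_1=\ddot s$ in \eqref{stil1sddot}, and $\check\sigma=\ddot\sigma\in\{0,1\}$ matches the temporal indices.

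The hypothesis on $\rho$ is therefore exactly the statement that the parenthesized factor in the above display is $\leq \rho$, so that
\[
\text{err}_{Tay}\leq \tilde{C}\,C_{Sob}(1+C^d_{KP})\,\rho\,\|(\uld{\nlc},\uld{\uvec})\|_{\XXX}.
\]
Choosing $\rho$ so small that $\tilde{C}\,C_{Sob}(1+C^d_{KP})\,\rho<1$ (which is possible since $\|\vec{F}'(0,\uvec^0)^{-1}\|_{\YYY\to\XXX}\leq 1$ by Theorem~\ref{thm:uniqueness-stability_lin}), the closeness condition \eqref{closeness} is satisfied and Theorem~\ref{thm:uniqueness-stability} gives the conclusion. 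The main point to watch is the book-keeping of Sobolev indices: I must check that the combined conditions \eqref{pp1p2}, \eqref{stil1sddot} together with $\ddot s\geq 0$ are simultaneously realizable under \eqref{condssigma}, and that when $s>\tfrac{d}{2}$ (so $\bar p(d,s)=\infty$) one reads $p_1^\sharp=d/s$ as a placeholder for any large finite exponent or $L^\infty$, with the corresponding Sobolev embedding still at hand. This is the only genuinely technical obstacle; the rest is routine combination of the already established estimates.
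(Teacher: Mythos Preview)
Your proof plan is correct and follows the same route as the paper's (very terse) argument: verifying that the hypotheses \eqref{kappas} and \eqref{condssigma} collect precisely the conditions needed for Theorem~\ref{thm:uniqueness-stability_lin} (namely \eqref{cond_ssigma}, $s>\tfrac12$ from Lemma~\ref{lem:trace2coeff}, \eqref{amplfac_lambda}/\eqref{amplfac_lambda1}, and finiteness of the constants in \eqref{constants} via \eqref{factors_JMGT}), and then invoking Theorem~\ref{thm:uniqueness-stability} through the Taylor-remainder bounds \eqref{estTay}/\eqref{estTay1}. One small misstatement to fix: the second inequality in \eqref{kappas} does \emph{not} control $1/|\widetilde{\mathfrak{B}}(\pole_\ell)|$ (that is the job of the \emph{first} inequality in \eqref{kappas}); rather, it bounds $|\mathfrak{B}_m|=|\widetilde{\mathfrak{B}}(\imath m\omega)|$, which is what enters the sum defining $\check{C}^{(\check{s}-s,\check{\sigma})}_{\vartheta\Theta}$ through \eqref{factors_JMGT}, and the condition $\kappa_2+\tfrac12<s-\check{s}$ then indeed yields summability---so your conclusion there is right even though the stated reason is garbled.
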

\begin{proof}
The conditions \eqref{condssigma} are a collection of 
the conditions 
\eqref{cond_ssigma}, 
$s>1/2$ according to Lemma~\ref{lem:trace2coeff}, 
as well as \eqref{amplfac_lambda}, summability and boundedness in \eqref{constants}, cf. \eqref{factors_JMGT},
under the given asymptotics of $\widetilde{\mathfrak{B}}_m$ and $\widetilde{\mathfrak{B}}(\pole_j)$
\[
s-\ddot{s}\leq2\kappa_1,\quad 
\check{s}\leq s,\quad
\check{\sigma}\leq\ddot{\sigma},\quad
\check{\sigma}+\check{s}-s+\kappa_2<-\frac12.
\]
\end{proof}
\begin{remark}
If 
\begin{equation}\label{necessarykappa}
\max\{0,\kappa_2\}+\frac12<2\kappa_1
\end{equation}
then a possible choice is 
\begin{equation}\label{choices}
s\in(\max\{0,\kappa_2\}+\frac12,2\kappa_1], \quad \ddot{s}=\check{s}=0, \quad \ddot{\sigma}=\check{\sigma}\in\{0,1\}
\end{equation}
Using an all-at once formulation of the inverse problem, we are not bound to using typical PDE solution spaces; we use this freedom for choosing the indices $s$, $\check{s}$, $\ddot{s}$, $\check{\sigma}$, $\ddot{\sigma}$ in Corollary~\ref{cor:stabSobolev}.
\end{remark}
The findings of Section~\ref{sec:discussion} below 
\revision{
lead us to the choice 
\[s\in(\tfrac12,1], \quad \ddot{s}=\check{s}=0,  \quad \ddot{\sigma}\textcolor{blue}{=\check{\sigma}}=1,\]
and} 
allow us to draw the 
\revision{following} 
conclusion.
\begin{corollary}\label{cor:stabJMGT}
With $\tau>0$, a reference pressure $u^0_m(x)=\phi(x)\,\revision{\frac{2}{T}}\int_0^T \psi(t)\, e^{-\imath m\omega t}\, dt$, whose time dependent factor $\psi$ with $\widetilde{\mathfrak{B}}(o)=\revision{\frac{2}{T}}\int_0^T \psi^2(t) e^{-ot}\, dt$ is chosen according to Section~\ref{rem:B_m}, 
\revision{and whose space dependent factor satisfies \eqref{phiLinfty}, as well as the spaces
\marginrr{2.3.1.(a)}
\[
\XXX=H^s(\Omega)\times h^1(L^2(\Omega)), \quad 
\YYY= h^1(L^2(\Omega))\times \YYY^{obs}_{pol}
\]
with $s\in(\frac12,1]$,
}
there exist $\rho>0$, $C>0$ such that for all $(\nlc,\tilde{\uvec})$ such that  
\[
\textcolor{blue}{\|(\tilde{u}-u^0)\,(\tilde{u}+u^0)\|_{H^1(0,T;L^{d/s}(\Omega))} 
+\|\nlc\,(\tilde{u}+u)\|_{H^1(0,T;L^{\infty}(\Omega))}
}
\leq\rho,
\]
the stability and uniqueness estimates
\eqref{stabest_nlcu1}, \eqref{stabest_nlcu_ext} hold.
\end{corollary}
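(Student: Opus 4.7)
The plan is to obtain Corollary~\ref{cor:stabJMGT} as a direct specialization of Corollary~\ref{cor:stabSobolev} to the JMGT setting. Thus the work reduces to verifying, for the indicated choice of Sobolev indices $s\in(\tfrac12,1]$, $\ddot s=\check s=0$, $\ddot\sigma=\check\sigma=1$, that the two growth/decay exponents $\kappa_1,\kappa_2$ appearing in \eqref{kappas} can be realized with values satisfying the constraints
\[
\kappa_2+\tfrac12 < s\leq 2\kappa_1,
\]
which is exactly what \eqref{condssigma} collapses to once $\ddot s=\check s=0$ and $\ddot\sigma=\check\sigma=1$ are substituted.

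First I would record the asymptotics that are already available for the JMGT symbol: by \eqref{zell_JMGT} one has $\pole_\ell=\imath\bigl((c^2+\delta/\tau)\lambda_\ell+\omzer/\tau\bigr)^{1/2}+O(\lambda_\ell^{-1/4})+O(\delta^{1/2}\lambda_\ell^{1/4})$, so in particular $|\pole_\ell|\asymp\lambda_\ell^{1/2}$ and $\Re(\pole_\ell)$ stays bounded as $\ell\to\infty$ (with $\Re(\pole_\ell)\le 0$ by \cite[Lemma~5.1]{nonlinearity_imaging_fracWest}). Therefore $e^{\Re(\pole_j)T}$ is bounded from below and the second factor in the right-hand side of \eqref{kappas} (the one picked by $\ddot\sigma=1$) behaves like $|\pole_j|\asymp\lambda_j^{1/2}$. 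Second, I would invoke the construction of $\psi$ recorded in Section~\ref{rem:B_m}: the claim there is that a suitable choice of the temporal profile ensures $|\widetilde{\mathfrak{B}}(\pole_j)|\gtrsim 1$ (no polynomial decay in $\lambda_j$) and $|\widetilde{\mathfrak{B}}(\imath m\omega)|\lesssim 1$. Combining these two facts delivers admissible exponents $\kappa_1=\tfrac12$ and $\kappa_2=0$ in \eqref{kappas}.

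With $\kappa_1=\tfrac12$ and $\kappa_2=0$, the condition $\kappa_2+\tfrac12<s\le 2\kappa_1$ becomes $\tfrac12<s\le 1$, which is precisely the range prescribed in the statement. The other inequalities of \eqref{condssigma}, namely $s>\tfrac12$, $0\le\ddot s\le\check s\le s$, and $\check\sigma=\ddot\sigma\in\{0,1\}$, are trivially satisfied by the chosen indices. Hence all hypotheses of Corollary~\ref{cor:stabSobolev} are in force, and the closeness condition on $(\nlc,\tilde u)$ translates verbatim into the one stated in Corollary~\ref{cor:stabJMGT} (note that the $L^\infty$/$H^{\ddot\sigma}$ alternative in Corollary~\ref{cor:stabSobolev} collapses to $H^1(0,T;\cdot)$ because $\ddot\sigma=1$). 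Applying Corollary~\ref{cor:stabSobolev} yields \eqref{stabest_nlcu1} and \eqref{stabest_nlcu_ext} with the advertised constants.

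The only nontrivial step is the bookkeeping of the exponents entering \eqref{kappas}: one has to make sure that the ``good'' $\tau>0$ asymptotics of the JMGT poles (square-root growth, bounded real part) plus the choice of $\psi$ in Section~\ref{rem:B_m} are strong enough to absorb the $\lambda_j^{(s-\ddot s)/2}=\lambda_j^{s/2}$ amplification factor in \eqref{amplfac_lambda1} up to the endpoint $s=1$. That is exactly where $\tau>0$ is used: for $\tau=0$ the asymptotic $|\pole_j|\asymp\lambda_j^{1/2}$ would degrade, and the admissible range of $s$ would shrink. Once this is checked, the rest of the argument is a mechanical substitution into Corollary~\ref{cor:stabSobolev}.
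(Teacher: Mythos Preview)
Your overall plan is the same as the paper's: Corollary~\ref{cor:stabJMGT} is nothing more than Corollary~\ref{cor:stabSobolev} specialized to the JMGT values $\kappa_1=\tfrac12$, $\kappa_2=0$ coming from Section~\ref{rem:B_m} (cf.\ \eqref{kappasJMGTWest}), and the resulting constraint $\kappa_2+\tfrac12<s\le 2\kappa_1$ indeed gives $s\in(\tfrac12,1]$ with $\ddot s=\check s=0$, $\ddot\sigma=1$.

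However, your justification of $\kappa_1=\tfrac12$ contains a genuine error. You assert that $\Re(\pole_\ell)$ stays bounded and that $|\widetilde{\mathfrak{B}}(\pole_j)|\gtrsim 1$. Neither holds in the JMGT case with strong damping $\delta>0$: by \eqref{factors_JMGT} one has $-\Re(\pole_\ell)=\mathcal{O}(\delta^{1/2}\lambda_\ell^{1/4})\to\infty$, so $e^{\Re(\pole_j)T}$ is \emph{not} bounded below, and the $\psi$ constructed in Section~\ref{rem:B_m} does not yield $|\widetilde{\mathfrak{B}}(\pole_j)|\gtrsim 1$ but rather the exponentially growing lower bound $|\widetilde{\mathfrak{B}}(\pole_j)|\ge\tfrac{1}{2T}e^{-\Re(\pole_j)T}$, cf.\ \eqref{Bmpsi}. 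The point of the near-$\delta_T$ choice of $\psi^2$ is precisely that this exponential growth cancels the exponential decay of $e^{\Re(\pole_j)T}$ in \eqref{kappas}, leaving only the polynomial factor $|\pole_j|\asymp\lambda_j^{1/2}$ and hence $\kappa_1=\tfrac12$. Your two individually false claims happen to multiply to the correct product, but the mechanism you describe is not the one that actually operates.

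A smaller point: your closing remark about where $\tau>0$ enters is also off. For $\tau=0$, $\delta>0$ one has $|\pole_j|\asymp\lambda_j$ by \eqref{factors_West}, which would give an even larger $\kappa_1$. The real obstruction in the Westervelt case is the finiteness of $\check{C}^{(\check s-s,\check\sigma)}_{\vartheta\Theta}$, as the paper notes right after the corollary; cf.\ \eqref{constants_JMGT}, where the extra factor $|o_m|^{-(s-\check s)}$ supplied by $\tau>0$ is exactly what makes the series summable.
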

We emphasize that a positive relaxation time $\tau>0$ is essential for this result, since the question on how to choose the source such that the factor $\check{C}^{(\check{s}-s,\check{\sigma})}_{\vartheta\Theta}$ is finite remains open in case $\tau=0$, that is, for the classical Westervelt model, cf. \eqref{factors_West} in Section~\ref{rem:different_models} below.

\subsection{Discussion of conditions}\label{sec:discussion}

\subsubsection{Noise amplification} \label{rem:amplfac}
The amplification factor of the observation contribution $\uld{p}^{obs}$ for the nonlinearity coefficient increment $\uld{\nlc}$ is critical for an assessment of the degree of ill-posedness of the inverse problem, since it relates the imaging quantity to the part of the data, that is contaminated by measurement noise (while the PDE residual part $\uld{r}$ is given exactly).  
According to \eqref{constants}, \eqref{stabest_nlc}, \eqref{Ynorms_pol}, it consists of
\begin{itemize}
\item factors $\|(\text{Tr}_\Sigma^{s,j})^{-1}\|_{L(H^{s-1/2}(\Sigma))\to h^{s,j}}$ depending on the geometry of $\Sigma$ cf. Lemma~\ref{lem:trace2coeff};
\item factors $\frac{1}{\widetilde{\mathfrak{B}}({\pole_j})}$ depending on the choice of the excitation (via $\mathfrak{B}_m:=\mathcal{B}_m(\hat{\psi},\hat{\psi})$) and on the PDE (via location of the poles ${\pole_j}$);
\item a factor 
$\bar{C}_{\vartheta\Theta}
=\sup_{\ell\in\mathbb{N}} \left|\frac{\Theta({\pole_\ell})\Psi'({\pole_\ell})}{\pole_\ell^2}\right|
=\sup_{\ell\in\mathbb{N}} \left|\frac{\vartheta'({\pole_\ell})+\lambda_\ell\Theta'({\pole_\ell})}{\pole_\ell^2}\right|$
depending on the PDE via the functions $\vartheta$, $\Theta$. 
\end{itemize} 

\subsubsection{Pole location} \label{rem:loc_poles}
The poles $\pole_j=\tfrac{1}{z_j}$ depend on the PDE model as they are zeros of the function $o\mapsto \vartheta(o)+\Theta(o)\lambda_j$.
They appear in several of the contributions to the noise amplification factor (cf. Section~\ref{rem:amplfac}).
Additionally, the estimate \eqref{est_interpol_r}, \eqref{est_interpol_r1} and the condition \eqref{amplfac_lambda} explicitly show the relevance of the growth of the negative real part of the poles for the stability of the inverse problem. 
Ideally, if the sequence 
$(-\Re(\pole_j))_{j\in\mathbb{N}}$ 
stays bounded or grows only logarithmically, then these factors stay bounded or only grow polynomially.
It is intuitive that the closer the poles are to the imaginiary axis where measurements $\uld{p}^{obs}_m$ and PDE data $\uld{r}_m$ are taken, the better posed the inverse problem is.
From a physical point of view, attenuation is responsible for the loss of information and indeed,  
it is the attenuation coefficient $\delta$ that moves the poles away from the imaginary axis.
As we will see below, the relaxation time $\tau$ counteracts this effect.
This is illustrated in Figure~\ref{fig:poles} and can also be seen from the following sensitivity analysis.
Using polar coordinates $\pole_j=\radi_j\cos(\argu_j)+\imath\sin(\argu_j)$ the equation for the poles reads as (skipping the subscript $j$)
\[
f(\radi,\argu;\delta,\tau)=\left(\begin{array}{l}
f^\Re(\radi,\argu;\delta,\tau)\\
f^\Im(\radi,\argu;\delta,\tau)
\end{array}\right)=\left(\begin{array}{l}
\tau\radi^3\cos(3\argu)+\radi^2\cos(2\argu)+(\tau c^2+\delta)\lambda\radi\cos(\argu)+c^2\lambda\\
\tau\radi^3\sin(3\argu)+\radi^2\sin(2\argu)+(\tau c^2+\delta)\lambda\radi\sin(\argu)
\end{array}\right).
\]
We apply the Implicit Function Theorem 
\[
\frac{\partial (\radi,\argu)}{\partial(\delta,\tau)}=-\frac{\partial f}{\partial(\radi,\argu)}^{-1}\frac{\partial f}{\partial(\delta,\tau)}
=-\left(\begin{array}{cc} 
f^\Re_\radi&f^\Re_\argu\\
f^\Im_\radi&f^\Im_\argu
\end{array}\right)^{-1}
\left(\begin{array}{cc} 
f^\Re_\delta&f^\Re_\tau\\
f^\Im_\delta&f^\Im_\tau
\end{array}\right)
\]
to assess the sensitivity of the argument $\theta$ with respect to the parameters $\delta$ and $\tau$. To this end, note that according to \cite[Lemma 5.1]{nonlinearity_imaging_fracWest}, the poles have negative real parts and therefore $\argu\in[\tfrac{\pi}{2},\tfrac{3\pi}{2}]$.
The derivatives compute as
\[\begin{aligned}
f^\Re_\radi&=3\tau\radi^2\cos(3\argu)+2\radi\cos(2\argu)+(\tau c^2+\delta)\lambda\cos(\argu)\\
f^\Im_\radi&=3\tau\radi^2\sin(3\argu)+2\radi\sin(2\argu)+(\tau c^2+\delta)\lambda\sin(\argu)\\
f^\Re_\argu&=-\radi f^\Im_\radi\\
f^\Im_\argu&=\radi f^\Re_\radi\\
f^\Re_\delta&=\lambda\radi\cos(\argu)\\
f^\Im_\delta&=\lambda\radi\sin(\argu)\\
f^\Re_\tau&=\radi^3\cos(3\argu)+c^2\lambda\radi\cos(\argu)\\
f^\Im_\tau&=\radi^3\sin(3\argu)+c^2\lambda\radi\sin(\argu)\\
\end{aligned}\]
and Cramer's rule as well as trigonometric identities yield
\[\begin{aligned}
\frac{\partial \argu}{\partial\delta}&=
-\frac{f^\Re_\radi f^\Im_\delta-f^\Im_\radi f^\Re_\delta}{f^\Re_\radi f^\Im_\argu-f^\Im_\radi f^\Re_\argu}=\frac{\lambda\radi}{(f^\Re_\radi)^2+(f^\Im_\radi)^2}
2(3\tau\radi\cos(\argu)+1)\sin(\argu)
\\
\frac{\partial \argu}{\partial\tau}&=
-\frac{f^\Re_\radi f^\Im_\tau-f^\Im_\radi f^\Re_\tau}{f^\Re_\radi f^\Im_\argu-f^\Im_\radi f^\Re_\argu}=\frac{\radi}{(f^\Re_\radi)^2+(f^\Im_\radi)^2}
2\bigl((2\tau c^2-\delta)\lambda\radi\cos(\argu)+(c^2\lambda-\radi^2)\bigr)\sin(\argu).
\end{aligned}\]
Thus the change of the negative real part of the pole is governed by 
\[\begin{aligned}
\frac{\partial}{\partial\delta}(-\Re(\pole))
&=\frac{\partial}{\partial\delta}(-\radi\cos(\argu))=\radi\sin(\argu)\frac{\partial\argu}{\partial\delta}\\
&=\frac{2\lambda\radi^2\sin^2(\argu)}{(f^\Re_\radi)^2+(f^\Im_\radi)^2}(3\tau\Re(\pole)+1)
\end{aligned}\]
which is positive (that is, ill-posedness increases due to increasing $\delta$) iff
\begin{equation}\label{partialminusRepolepartialdelta}
\frac{\partial}{\partial\delta}(-\Re(\pole))>0
\quad \Leftrightarrow \quad -\Re(\pole)<\frac{1}{3\tau}.
\end{equation}
In particular, in case $\tau=0$, an increase of $\delta$ always causes an increase of the 
\revision{ill-posedess}, 
\marginR{8.}
whereas $\tau>0$ according to \eqref{partialminusRepolepartialdelta} limits this effect.

Analogously, we investigate the influence of increasing $\tau$ on the magnitude of the negative real part of the pole for fixed $\delta>0$.
We have  
\[\begin{aligned}
\frac{\partial}{\partial\tau}(-\Re(\pole))
&=\frac{\partial}{\partial\tau}(-\radi\cos(\argu))=\radi\sin(\argu)\frac{\partial\argu}{\partial\tau}\\
&=\frac{2\radi^2\sin^2(\argu)}{(f^\Re_\radi)^2+(f^\Im_\radi)^2}
\bigl((2\tau c^2-\delta)\lambda\radi\cos(\argu)+(c^2\lambda-\radi^2)\bigr),
\end{aligned}\]
which with $\radi\cos(\argu)=\Re(\pole)$, $\radi^2=|\pole|^2$ yields the expected decrease of ill-posedness with increasing $\tau$ iff
\begin{equation}\label{partialminusRepolepartialtau}
\frac{\partial}{\partial\tau}(-\Re(\pole))<0
\quad \Leftrightarrow \quad\begin{cases}
 -\Re(\pole)<\frac{|\pole|^2-c^2\lambda}{\delta-2\tau c^2}\text{ and }\delta>2\tau c^2\text{ or }\\
 -\Re(\pole)>\frac{c^2\lambda-|\pole|^2}{2\tau c^2-\delta}\text{ and }\delta<2\tau c^2\text{ or }\\
c^2\lambda<|\pole|^2\text{ and }\delta=2\tau c^2
\end{cases}
\end{equation}
where according to \eqref{zell_JMGT} we have 
$|\,|\pole|^2-c^2\lambda|=O(\sqrt{\lambda})=O(|\pole|)$
as $\lambda\to\infty$. 
More precisely, using the depressed cubic equation for $\radi=|\pole|$ that results from eliminating the quadratic term 
\[
0=f^\Re(\radi,\argu;\delta,\tau)\sin(2\argu)-f^\Im(\radi,\argu;\delta,\tau)\cos(2\argu)
=\sin(\argu)\bigl(\tau\radi^3-(\tau c^2+\delta)\lambda\radi+2c^2\lambda\cos(\argu)\bigr)
\]
and applying Vi\`{e}te's triginometric formula for the roots of a cubic equation, we obtain
\[\begin{aligned}
r=& \tfrac{2}{\sqrt{3}}\, \tilde{c}\, \sqrt{\lambda}\, \cos\bigl(\tfrac13 \textrm{arccos}(-\cos(\argu)\tfrac{c^2}{\tau c^2+\delta}\tfrac{\sqrt{3}}{\tilde{c}\, \sqrt{\lambda}}\bigr)
\end{aligned}\] 
(the other two roots don't qualify because of their sign or their asymptotic magnitude as $\lambda\to\infty$)
where we recall $\tilde{c}^2=c^2+\frac{\delta}{\tau}$ and define $\tilde{b}=\tau c^2+2\delta+\frac{\delta^2}{\tau c^2}$.
Due to the fact that $\textrm{arccos}(0)=\frac{\pi}{2}$, $-\cos(\argu)=-\Re(\pole)/\radi>0$, $\cos(\frac{\pi}{6})=\frac{\sqrt{3}}{2}$, $\textrm{arccos}'(0)=-1$, and $\cos'(\frac{\pi}{6})=-\frac12$, we thus have
\[
|\pole|=\radi\in \bigl(\tilde{c}\, \sqrt{\lambda},\, \tilde{c}\, \sqrt{\lambda} (1+O(\tfrac{1}{\tilde{b}\sqrt{\lambda}}))\bigr) 
\] 
As a consequence, $|\pole|^2>c^2\lambda$ always holds and from \eqref{partialminusRepolepartialtau} we obtain
\[
\frac{\partial}{\partial\tau}(-\Re(\pole))<0
\quad \Leftarrow \quad \delta\leq 2\tau c^2.
\]
That is, as soon as $\tau$ lies above the threshold $\frac{\delta}{2c^2}$, its growth decreases ill-posedeness.

\begin{figure}
\includegraphics[width=0.3\textwidth]{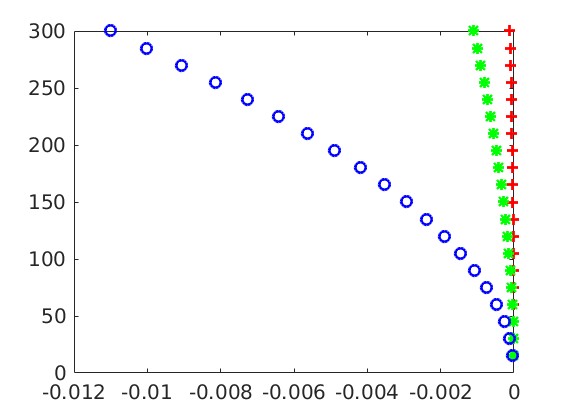}
\hspace*{0.02\textwidth}
\includegraphics[width=0.3\textwidth]{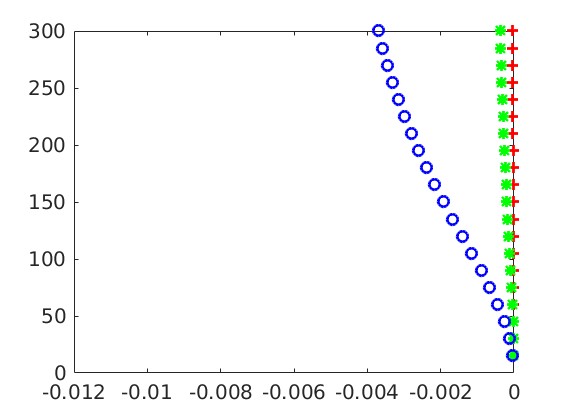}
\hspace*{0.02\textwidth}
\includegraphics[width=0.3\textwidth]{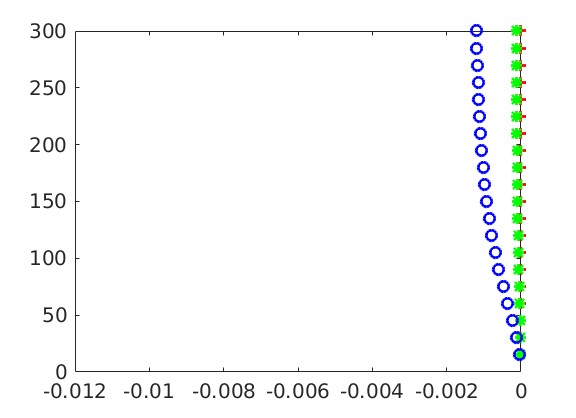}
\caption{Poles at different levels of attenuation 
($\delta=6*10^{-9}$\ldots\textcolor{red}{
\textbf{+}
}, 
$\delta=6*10^{-8}$\ldots\textcolor{green}{
\textbf{*}
}, 
$\delta=6*10^{-7}$\ldots\textcolor{blue}{
$\mathbf{\circ}$})
and with different relaxation times ($\tau=0.001\mu s$\ldots left, $\tau=0.005\mu s$\ldots middle, $\tau=0.01\mu s$\ldots right); $c=1500 m/s$, $\Omega=(0,10 mm)$
\label{fig:poles}}
\end{figure}

\subsubsection{PDE dependent image space norms} \label{rem:constants}
Besides $\bar{C}_{\vartheta\Theta}$, also the other constants appearing in the norms, cf. \eqref{constants}, depend on the PDE. 
Solving the elementary minimization problems for the denominator in the last two constants in \eqref{constants} with respect to $\lambda_j>\lambda_1$, 
\[
\begin{aligned}
\inf_{\lambda\geq\lambda_1}\lambda^\alpha|\vartheta(o_m)+\Theta(o_m)\lambda|^2
&\geq\min_{\lambda\geq\lambda_1}\lambda^\alpha\bigl(|\vartheta(o_m)|^2+2\Re(\vartheta(o_m)\overline{\Theta(o_m)})\lambda+|\Theta(o_m)|^2\lambda^2\bigr)\\
&=\min\{j(\lambda_1),j(\lambda_-),j(\lambda_+)\},
\end{aligned}
\]
where with $a^2= |\Theta(o_m)|^2$, $b=-\Re(\vartheta(o_m)\overline{\Theta(o_m)})$, $d^2=|\vartheta(o_m)|^2$, thus $a^2d^2\geq b^2$, 
\[
\begin{aligned}
&j(\lambda)=a^2\lambda^{\alpha+2}-2b\lambda^{\alpha+1}+d^2\lambda^{\alpha}
= 
\frac{\lambda^\alpha}{\alpha+2}\Bigl((\alpha+2)a^2\lambda^2-2(\alpha+1)b\lambda+\alpha d^2\ 
-2b\lambda+2d^2\Bigr)\\
&j'(\lambda)=\lambda^{\alpha-1}((\alpha+2)a^2\lambda^2-2(\alpha+1)b\lambda+\alpha d^2)\\
&\lambda_\pm=\frac{(\alpha+1)b\pm\sqrt{D}}{(\alpha+2)a^2},
\qquad
D=(\alpha+1)^2b^2-\alpha(\alpha+2)a^2d^2=a^2d^2-(\alpha+1)^2(a^2d^2-b^2)\\
&j(\lambda_\pm)=\frac{2\lambda_\pm^\alpha}{\alpha+2}(d^2-b\lambda_\pm)
\end{aligned}
\]
Considering both cases $\tau>0$ (JMGT) and $\tau=0$ (the classical Westervelt model), we have (with $o_m^2=-|o_m|^2$)
\[\begin{aligned}
a^2=(\tau c^2+\delta)^2 |o_m|^2+c^4,  \quad d^2=\tau^2|o_m|^6+|o_m|^4, \quad   
\\
b=\tau(\tau c^2+\delta)|o_m|^4+c^2|o_m|^2,  \quad  
a^2d^2-b^2
=\delta^2 |o_m|^6,
\end{aligned}\] 
which asymptotically (that is, for large $|o_m|$) yields
$\sqrt{D}\sim b\sim \tau(\tau c^2+\delta)|o_m|^4$, so that
\[\begin{aligned}
&j(\lambda_+)
\sim \frac{2}{\alpha+2}\left(\frac{b}{a^2}\right)^\alpha\frac{a^2d^2-b^2}{a^2}
\sim \frac{2}{\alpha+2}\frac{\tau^\alpha\delta^2}{(\tau c^2+\delta)^{2+\alpha}}\, |o_m|^{4+2\alpha}\\
&j(\lambda_-)
\sim \frac{4\alpha^\alpha}{(\alpha+2)^{\alpha+2}}\left(\frac{b}{a^2}\right)^\alpha\frac{2a^2d^2+\alpha(a^2d^2-b^2)}{a^2}
\sim \frac{4\alpha^\alpha}{(\alpha+2)^{\alpha+2}}
\frac{\tau^{\alpha+2}}{(\tau c^2+\delta)^\alpha}\, |o_m|^{6+2\alpha}
\end{aligned}\] 
hence 
\begin{equation}\label{min_denominator}
\begin{aligned}
&\inf_{\lambda\geq\lambda_1}|\vartheta(o_m)+\Theta(o_m)\lambda|^2
=\frac{\delta^2\,|o_m|^6}{c^4+(\tau c^2+\delta)^2\,|o_m|^2} 
\geq \frac{\delta^2}{c^4/\omega^2+(\tau c^2+\delta)^2} |o_m|^4\\
&\inf_{\lambda\geq\lambda_1}\lambda^\alpha|\vartheta(o_m)+\Theta(o_m)\lambda|^2
\sim 
\begin{cases} 
\frac{2}{\alpha+2}
\frac{\tau^\alpha\delta^2}{(\tau c^2+\delta)^{2+\alpha}}\, |o_m|^{4+2\alpha} 
&\text{ if }\tau>0\\
\lambda_1^{\alpha}|o_m|^4 &\text{ if }\tau=0
\end{cases}
\end{aligned}
\end{equation}
(since in case $\tau=0$ the discriminant $D$ is negative for $\alpha>0$ and large $|o_m|$, hence the minimum is attained at $\lambda=\lambda_1$)
and therefore
\begin{equation}\label{constants_JMGT}
\begin{aligned}
&\check{C}^{(-\alpha,\check{\sigma})}_{\vartheta\Theta}
\lesssim
\begin{cases}
\left(\sum_{m\in\N}|o_m^{\check{\sigma}-\alpha}\,\mathfrak{B}_m|^2\right)^{1/2}& \text{ if }\tau>0\\
\left(\sum_{m\in\N}|o_m^{\check{\sigma}}\mathfrak{B}_m|^2\right)^{1/2}& \text{ if }\tau=0
\end{cases}\\
&\hat{C}_{\vartheta\Theta}^{(0,\check{\sigma}-\ddot{\sigma})}
\leq\frac{\sqrt{c^4/\omega^2+(\tau c^2+\delta)^2}}{\delta}
 \sup_{m\in\N} |o_m^{\check{\sigma}-\ddot{\sigma}}|
\leq\frac{\sqrt{c^4/\omega^2+(\tau c^2+\delta)^2}}{2\tau c^2+\delta}
 \,|o_1^{\check{\sigma}-\ddot{\sigma}}|
\end{aligned}
\end{equation}
Note that as opposed to plain Westervelt ($\tau=0$), JMGT provides the chance to achieve summability in $\check{C}^{(-\alpha,\check{\sigma})}_{\vartheta\Theta}$ by choosing $\alpha=s-\check{s}>0$ large enough, even for just bounded or even (polynomially) increasing factors $|\mathfrak{B}_m|$, cf. \eqref{BmdeltaT}.
In view of the requirement $\check{s}-\ddot{s}\geq0$ from \eqref{cond_ssigma}, we restrict ourselves to considering the best possible case of $\hat{C}_{\vartheta\Theta}^{(\check{s}-\ddot{s},\check{\sigma}-\ddot{\sigma})}$ here.

\subsubsection{Comparison of models}\label{rem:different_models}
We now compare the items discussed in Sections~\ref{rem:amplfac}, \ref{rem:loc_poles}, \ref{rem:constants}, for some models relevant for the underlying nonlinearity imaging problem.
The uniqueness proof given in Sections~\ref{sec:uniqueness-lin}, \ref{sec:uniqueness-nonlin} is sufficiently general to comprise a large variety of models via  different choices of $\vartheta$, $\Theta$. 
In fact, JMGT via limiting cases of vanishing relaxation time $\tau$ and/or diffusivity of sound $\delta$ contains some of them.

For two of these models (Westervelt, JMGT with strong damping), existence of periodic solutions has already been established \cite{periodicJMGT,periodicWestervelt,periodicWest_2}; for the others this is still missing. Note, however, that with the all-at-once approach we are using here, well-posedness of the forward problem is not a strict requirement, rather a strong motivation.

We start by recalling the PDE dependent items in the uniqueness proof for the JMGT case 
$\tau>0$, cf. \eqref{constants_JMGT}.
From \eqref{zell_JMGT}, \eqref{amplfac_JMGT}, \eqref{constants_JMGT} and \cite[Lemma 5.1]{nonlinearity_imaging_fracWest}, we have 
\begin{equation}\label{factors_JMGT}
\begin{aligned}
&|\pole_\ell|= \tilde{c}\sqrt{\lambda_\ell} +O(1)+\mathcal{O}(\omzer^{1/2}\lambda_\ell^{-1/4})+\mathcal{O}(\delta^{1/2}\lambda_\ell^{1/4})\text{ with }\tilde{c}^2=c^2+\frac{\delta}{\tau}\\
&0\leq-\Re({\pole_\ell})=\mathcal{O}(\omzer^{1/2}\lambda_\ell^{-1/4})+\mathcal{O}(\delta^{1/2}\lambda_\ell^{1/4})
\\
&\bar{C}_{\vartheta\Theta} 
\lesssim 2\tau\\
&\check{C}^{(\check{s}-s,\check{\sigma})}_{\vartheta\Theta}
\lesssim
\left(\sum_{m\in\N}|o_m^{\check{\sigma}+\check{s}-s}\,\mathfrak{B}_m|^2\right)^{1/2}\\
&\hat{C}_{\vartheta\Theta}^{(0,\check{\sigma}-\ddot{\sigma})}
\leq\frac{\sqrt{c^4/\omega^2+(\tau c^2+\delta)^2}}{2\tau c^2+\delta}
 \,|o_1^{\check{\sigma}-\ddot{\sigma}}|
\end{aligned}
\end{equation}
\marginR{9.}
for $s\geq\check{s}$, $\ddot{\sigma}\geq\check{\sigma}$.
The behaviour of the poles heavily depends on the strength of the damping.
In the strong damping case $\delta>0$, the negative real parts of the poles tend to infinity with a rate of $\lambda_\ell^{1/4}$, whereas with weak or no damping $\delta=0$, $\omzer\geq0$ they decrease.

We first of all compare \eqref{factors_JMGT} for JMGT to the pure undamped second order wave equation case $\tau=\delta=\omzer=0$, where 
\begin{equation*}
\begin{aligned}
&\Psi(o)=-\frac{o^2}{c^2}, \quad 
\Psi'(o)=-2\frac{o}{c^2},\quad
{\pole_\ell}=\imath c \sqrt{\lambda_\ell},\quad
\frac{\Theta({\pole_\ell})\Psi'({\pole_\ell})}{\pole_\ell^2}
=-2z_\ell=\imath \tfrac{2}{c} \lambda_\ell^{-1/2}.
\end{aligned}
\end{equation*}
Thus, both the location of the poles is ideal, namely on the imaginary axis, and the amplification factor $\frac{\Theta({\pole_\ell})\Psi'({\pole_\ell})}{\pole_\ell^2}$ decays with a rate $\lambda_j^{-1/2}$, 
which indicates a (theoretical) one derivative gain of reconstruction under the undamped wave equation model as compared to reconstruction under the JMGT model.
However, \eqref{min_denominator} doesn't yield finiteness of 
$\check{C}^{(0,\check{\sigma})}_{\vartheta\Theta}$ and $\hat{C}_{\vartheta\Theta}^{(0,\check{\sigma}-\ddot{\sigma})}$. To bound these quantities, one would rather have to more closely inspect the location of the poles on the imaginary axis and their distances from the measurement points $\imath m\omega$.

Including weak damping in the second order wave equation $\tau=0$, $\delta=0$, $\omzer>0$
\[
\vartheta(o)=o^2+\omzer o, \quad \Theta(o)=c^2,
\]
we have (with $o_m^2=-|o_m|^2$, $a^2=c^4$, $b=c^2|o_m|^2$, $d^2=|o_m|^4+\omzer|o_m|^2$, $a^2d^2-b^2=c^4\omzer|o_m|^2$, $D=c^4(|o_m|^4-\alpha(\alpha+2)\omzer|o_m|^2)$, $\sqrt{D}\sim c^2|o_m|^2=b$)
\[
\begin{aligned}
&\Psi(o)=-\frac{o^2+\omzer o}{c^2}, \quad 
\Psi'(o)=-\frac{2o+\omzer }{c^2},\\
&{\pole_\ell}=-\tfrac{\omzer }{2}\pm\imath\sqrt{c^2\lambda_\ell-\tfrac{\omega_o^2}{4}},\\
&|\tfrac{\Theta({\pole_\ell})\Psi'({\pole_\ell})}{\pole_\ell^2}|
=|- (\omzer z_\ell+2z_\ell^2)|= \frac{\omzer }{c}\,\lambda_\ell^{-1/2} (1+\mathcal{O}(\lambda_\ell^{-1}))\\
&\min_{\lambda\geq\lambda_1}|\vartheta(o_m)+\Theta(o_m)\lambda|^2
=\omzer ^2|o_m|^2\\
&\min_{\lambda\geq\lambda_1}\lambda^\alpha|\vartheta(o_m)+\Theta(o_m)\lambda|^2
\sim \tfrac{2}{\alpha+2} \tfrac{\omzer}{c^{2\alpha}}\, |o_m|^{2+2\alpha}
\end{aligned}
\]
and thus 
\begin{equation}\label{factors_West_weak}
\begin{aligned}
&|\pole_\ell|=c \sqrt{\lambda_\ell}\\
&-\Re({\pole_\ell})=\frac{\omzer }{2}
\\
&\bar{C}_{\vartheta\Theta}\geq 
\frac{\omzer }{c}\,\lambda_\ell^{-1/2} (1+\mathcal{O}(\lambda_\ell^{-1}))\\
&\check{C}^{(\check{s}-s,\check{\sigma})}_{\vartheta\Theta}
\lesssim
\left(\sum_{m\in\N}
|o_m^{1+\check{\sigma}+\check{s}-s}\mathfrak{B}_m|^2
\right)^{1/2}\\
&\hat{C}_{\vartheta\Theta}^{(0,\check{\sigma}-\ddot{\sigma})}
\leq
\tfrac{1}{\omzer ^2} \sup_{m\in\N} |o_m^{1+\check{\sigma}-\ddot{\sigma}}|.
\end{aligned}
\end{equation}
While weak damping does not significantly change the favorable location of the poles as compared to the undamped wave equation, in view of $\check{\sigma}\geq\ddot{\sigma}$ according to \eqref{cond_ssigma}, \eqref{Bmpsi}, still no finite bound 
$\hat{C}_{\vartheta\Theta}^{(\check{s}-\ddot{s},\check{\sigma}-\ddot{\sigma})}$ can be found here. 
Also note that solutions of the nonlinear version of \eqref{JMGT-Westervelt} with $\tau=\delta=0$ cannot be established, 
not even under initial instead of periodicity conditions.

Finally, the classical Westervelt case $\tau=0$, with strong damping $\delta>0$ leads to  
\[
\begin{aligned}
&\Psi(o)=-\frac{o}{\delta}(1+\mathcal{O}(\tfrac{1}{o})), \quad 
\Psi'(o)=-\frac{1}{\delta}(1+\mathcal{O}(\tfrac{1}{o})),\\
&{\pole_\ell}=-\delta\lambda_\ell(1+\mathcal{O}(\tfrac{1}{\lambda_\ell})),\\
&\tfrac{\Theta({\pole_\ell})\Psi'({\pole_\ell})}{\pole_\ell^2}
=- z_\ell=\tfrac{1}{\delta} \lambda_\ell^{-1}(1+\mathcal{O}(\tfrac{1}{\lambda_\ell}))
\end{aligned}
\]
and thus we obtain (cf. \eqref{constants_JMGT})
\begin{equation}\label{factors_West}
\begin{aligned}
&|\pole_\ell|=\delta\lambda_\ell+\mathcal{O}(1)\\
&-\Re({\pole_\ell})=\delta\lambda_\ell+\mathcal{O}(1)
\\
&\bar{C}_{\vartheta\Theta}\geq 
|\tfrac{1}{\delta} \lambda_\ell^{-1}(1+\mathcal{O}(\lambda_\ell^{-1}))|\\
&\check{C}^{(\check{s}-s,\check{\sigma})}_{\vartheta\Theta}
\lesssim\left(\sum_{m\in\N}|o_m^{\check{\sigma}}\mathfrak{B}_m|^2\right)^{1/2}\\
&\hat{C}_{\vartheta\Theta}^{(0,\check{\sigma}-\ddot{\sigma})}
\leq\frac{\sqrt{c^4/\omega^2+\delta^2}}{\delta}
 \, |o_1^{\check{\sigma}-\ddot{\sigma}}|.
\end{aligned}
\end{equation}
for $s\geq\check{s}$, $\ddot{\sigma}\geq\check{\sigma}$.
The asymptotics of the amplification factor $\bar{C}_{\vartheta\Theta}$ point to an even two derivative gain as compared to JMGT.
However, note that in the wave and JMGT case the poles ${\pole_\ell}$ lie on (or close to) the imaginary axis, whereas in the Westervelt case they are close to the negative real axis. This is in fact the more substantial contribution to ill-posedness of the inverse problem, in view of the fact that the observations are taken on the imaginary axis $\widetilde{\uld{p}}^{obs}(\imath m\omega)=\uld{p}_m^{obs}$.
Moreover a choice $s<\check{s}$ does not improve the asympotics of $\check{C}^{(\check{s}-s,\check{\sigma})}_{\vartheta\Theta}$, cf. \eqref{min_denominator}, and therefore would require square summability of the sequence $(\mathfrak{B}_m)_{m\in\mathbb{N}}$; it is not at all obvious whether and how this can be obtained, cf. Section~\ref{rem:B_m}. 

\subsubsection{Choice of source sequence $(\mathfrak{B}_m)_{m\in\mathbb{N}}$} \label{rem:B_m}
We now discuss conditions required on $(\mathfrak{B}_m)_{m\in\mathbb{N}}$, which via its definition as 
\[
\mathfrak{B}_m=\mathcal{B}_m(\hat{\psi},\hat{\psi}), \quad u^0_m(x)=\phi(x)\psi_m
\]
are in fact conditions on the choice of the reference pressure field $\uvec^0$.

The most crucial (and only indispensible for our uniqueness and stability result Theorem~\ref{thm:uniqueness-stability}) condition on $(\mathfrak{B}_m)_{m\in\mathbb{N}}$ is finiteness of the factor $\check{C}^{(\check{s}-s,\check{\sigma})}_{\vartheta\Theta}$ in \eqref{constants}, that is, summability of the expression defining $\check{C}^{(\check{s}-s,\check{\sigma})}_{\vartheta\Theta}$.
Due to \eqref{constants_JMGT} if $\tau>0$, this can be supported (for given, at most polynomially increasing $(\mathfrak{B}_m)_{m\in\mathbb{N}}$) by choosing the function spaces in $\XXX$ such that $\alpha=s-\check{s}>0$ is large enough.

While this first condition demands that the factors $\mathfrak{B}_m$ do not become to large, on the other hand the role of  $(|\widetilde{\mathfrak{B}}({\pole_j})|)_{j\in\mathbb{N}}$ in the definition of the image space norms \eqref{Ynorms_pol} favors smallness of their reciprocals.
Indeed, the relation \eqref{amplfac_lambda} that allows to control $|\cdot|_{\YYY^{mod}_{pol}}$ by a Sobolev norm, means that the asymptotic behaviour of the sequence 
$(|\widetilde{\mathfrak{B}}(\pole_j)|)_{j\in\mathbb{N}}$ 
must dominate the exponential expression
$e^{-\Re({\pole_j})T}$ up to a factor that is polynomial in $\lambda_j$.

These competing requirements on $\mathfrak{B}$ are cast into the two conditions in \eqref{kappas}, where $\kappa_1$, $\kappa_2$ have to satisfy \eqref{necessarykappa}.

A formal choice accommodating both (independently of the model $\vartheta$, $\Theta$), is 
defined by a delta pulse located at the end of the time interval $\psi_*^2=\delta_T$, which yields
$\widetilde{\mathfrak{B}_*}(o)=\revision{\frac{2}{T}}\langle \psi_*^2, e^{-o\cdot}\rangle_{\mathfrak{M}(0,T),C(0,T)}
= \revision{\frac{2}{T}}\, e^{-oT}$ and thus
\begin{equation}\label{BmdeltaT}
|\widetilde{\mathfrak{B}_*}({\pole_j})| = \revision{\tfrac{2}{T}} e^{-\Re({\pole_j}) T}, \quad 
|\mathfrak{B}_{*m}| = \revision{\tfrac{2}{T}}|\, e^{-\imath m\omega T}| = \revision{\tfrac{2}{T}},
\end{equation}
therefore implying 
\eqref{kappas} with 
\[
\kappa_1=
\liminf_{j\to\infty}
\begin{cases}
\tfrac{\ln(-\Re({\pole_j}))}{2\ln(\lambda_j)}&\text{ if }\ddot{\sigma}=0\\
\tfrac{\ln(|{\pole_j}|)}{\ln(\lambda_j)}&\text{ if }\ddot{\sigma}=1 
\end{cases},
\qquad
\kappa_2=0.
\]
With \eqref{factors_JMGT}, \eqref{factors_West} we have
\begin{equation}\label{kappasJMGTWest}
\kappa_1=
\begin{cases}
\tfrac18&\text{ if }\tau>0, \ \delta>0, \ \ddot{\sigma}=0\\
\tfrac12&\text{ if }\tau>0, \ \phantom{\delta>0,} \ \ddot{\sigma}=1\\
\tfrac12&\text{ if }\tau=0, \ \delta>0, \ \ddot{\sigma}=0\\
1&\text{ if }\tau=0, \ \delta>0, \ \ddot{\sigma}=1
\end{cases},
\qquad
\kappa_2=0.
\end{equation}
Since $u^0$ must be chosen as an element of $H^{\check{\sigma}}(0,T;H^{\check{s}}(\Omega))$, we approximate $\psi_*$ by $\psi\in H^{\check{\sigma}}(0,T)$ such that 
$\|\psi^2-\delta_T\|_{\mathfrak{M}}<\epsilon$ and thus for any $f\in C(0,T)$ we have 
$|\int_0^T\psi^2(t)f(t)\, dt-f(T)|<\epsilon\sup_{t\in[0,T]}|f(t)|$. 
Applying this to $f(t)=\revision{\frac{2}{T}}e^{-ot}$ yields 
\[
|\revision{\tfrac{2}{T}}\int_0^T\psi^2(t)e^{-ot}\, dt-\revision{\tfrac{2}{T}}e^{-oT}|
\leq\tfrac{\epsilon}{T}\sup_{t\in[0,T]}|e^{-oT}|
=\tfrac{\epsilon}{T}\,e^{-\Re(o)T}
\]
for any $o\in\mathbb{C}$ with $\Re(o)\leq0$, thus 
\[
|\widetilde{\mathfrak{B}}({\pole_j})-\revision{\tfrac{2}{T}} e^{-{\pole_j} T}| 
\leq\tfrac{\epsilon}{T} e^{-\Re({\pole_j}) T}, \quad 
|\mathfrak{B}_m-\revision{\tfrac{2}{T}} e^{-\imath m\omega T}|\leq \tfrac{\epsilon}{T}.
\]
Therefore setting $\epsilon=\frac12$ and using the (inverse) triangle inequality suffices to obtain the estimates
\begin{equation}\label{Bmpsi}
|\widetilde{\mathfrak{B}}({\pole_j})| \geq\tfrac{1}{2T} e^{-\Re({\pole_j}) T}, \quad 
|\mathfrak{B}_m| \leq \tfrac{3}{2T},
\end{equation}
and preserve \eqref{kappasJMGTWest}.

\section{Newton's method for the inverse problem }\label{sec:Newton}

Here, like in \cite{nonlinearity_imaging_2d}, we consider 
\[
\vec{F}:Q\times V^{\mathbb{N}}\to W^{\mathbb{N}}\times Y^{\mathbb{N}}
\] 
on the function spaces
\begin{equation}\label{eqn:DBQVW}
\begin{aligned}
&Q = L^2(\Omega), \quad V= H^2(\Omega), \quad W = L^2(\Omega), \quad Y = L^2(\Sigma), 
\end{aligned}
\end{equation}
Note that we have considered $\vec{F}$ on different function spaces in Section~\ref{sec:uniqueness}.

Analogously to \cite{nonlinearity_imaging_2d} a linearized range invariance condition can be established by extending the dependency of $\nlc$ 
(that is, introducing an artificial dependency of $\nlc$ on $m$, \revision{lifting it to a higher dimensional space})
to $\vec{\nlc}=(\nlc_m)_{m\in\mathbb{N}}$ in a neighborhood $U$ of $(0,\uvec_0)$
\begin{equation}\label{rangeinvar_diff}
\textup{ for all } (\vec{\nlc},\uvec)\in U \, \exists r(\vec{\nlc},\uvec)\in Q^\mathbb{N}\times V^\mathbb{N}\,: \  \vec{F}(\vec{\nlc},\uvec)-\vec{F}(0,\uvec_0)=\vec{F}'(0,\uvec_0)r(\vec{\nlc},\uvec),
\end{equation}
with $\uvec_0$ as in Theorem~\ref{thm:uniqueness-stability_lin}, cf. \eqref{u0separable} 
\begin{equation}\label{rid_cr}
\begin{aligned}
\exists\, c_r\in(0,1)\, \forall (\vec{\nlc},\uvec)\in U(\subseteq {Q^\mathbb{N}\times V^\mathbb{N}})\, : \;&
\|r(\vec{\nlc},\uvec)-r(\vec{\nlc}^\dagger,\uvec^\dagger)-(\vec{\nlc}-\vec{\nlc}^\dagger,\uvec-\uvec^\dagger)\|_{Q^\mathbb{N}\times V^\mathbb{N}}\\
&\leq c_r\|(\vec{\nlc}-\vec{\nlc}^\dagger,\uvec-\uvec^\dagger)\|_{Q^\mathbb{N}\times V^\mathbb{N}}.
\end{aligned}
\end{equation}

Together with the linearized uniqueness result in Section~\ref{sec:uniqueness} this implies convergence of a frozen Newton method
\begin{equation}\label{frozenNewtonHilbert}
x_{n+1}^\varepsilon \in \mbox{argmin}_{x\in U}
\|\vec{F}'(x_0)(x-x_n^\varepsilon)+\vec{F}(x_n^\varepsilon)-y^\varepsilon\|_Y^2+\alpha_n\|\vec{\nlc}-\vec{\nlc}_0\|_{Q^M}^2+\|P\vec{\nlc}\|_{Q^M}^2.
\end{equation}
where $x=(\vec{\nlc},\uvec)$, $x_0=(0,\uvec_0)$, and $y^\varepsilon$ is the actually given noisy data.  
Here we penalize the artificial dependence of $\vec{\nlc}$ on $m$  by a term $P\vec{\nlc}\in Q^\mathbb{N}$
\[
(P\vec{\eta})_m = \nlc_m - \frac{\sum_{n=1}^\infty n^{-2}\,\nlc_n}{\sum_{n=1}^\infty n^{-2}}, 
\]
where the weights $n^{-2}$ in the $\ell^2$ projection are introduced in order to enforce convergence of the sums while allowing for the desired $m$-independent setting $\nlc_m\equiv\nlc$.

The convergence proof can be literally taken from \cite[Theorem 3.3]{nonlinearity_imaging_2d} by just adding the $\tau u_{ttt}$ term.
\begin{theorem}\label{thm:convfrozenNewton}
Let  $x^\dagger=(\vec{\nlc}^\dagger,\uvec^\dagger)$ be a solution to $\vec{F}(x)=y^\varepsilon$ and let for the noise level $\varepsilon\geq\|y^\varepsilon-y\|_Y$ the stopping index $n_*=n_*(\varepsilon)$ be chosen such that 
\begin{equation}\label{nstar}
n_*(\varepsilon)\to0, \quad \varepsilon\sum_{j=0}^{n_*(\varepsilon)-1}c_r^j\alpha_{n_*(\varepsilon)-j-1}^{-1/2} \to 0 \qquad \textup{ as }\varepsilon\to0
\end{equation}
with $c_r$ as in \eqref{rid_cr}.
Moreover, let the assumptions of Theorem~\ref{thm:uniqueness-stability_lin} be satisfied.

Then there exists $\rho>0$ sufficiently small such that for $x_0\in\mathcal{B}_\rho(x^\dagger)\subseteq U$ the iterates $(x_n^\varepsilon)_{n\in\{1,\ldots,n_*(\varepsilon)\}}$ are well-defined by \eqref{frozenNewtonHilbert}, remain in $\mathcal{B}_\rho(x^\dagger)$ and converge in $Q^{\mathbb{N}}\times V^{\mathbb{N}}$, $\|x_{n_*(\varepsilon)}^\varepsilon-x^\dagger\|_{Q^{\mathbb{N}}\times V^{\mathbb{N}}}\to0$ as $\varepsilon\to0$. In the noise free case $\varepsilon=0$, $n_*(\varepsilon)=\infty$ we have $\|x_n-x^\dagger\|_{Q^{\mathbb{N}}\times V^{\mathbb{N}}}\to0$ as $n\to\infty$.
\end{theorem}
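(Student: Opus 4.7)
The plan is to reduce Theorem~\ref{thm:convfrozenNewton} to the framework of \cite[Theorem 3.3]{nonlinearity_imaging_2d} by verifying that the additional $\tau u_{ttt}$ contribution in $\mathcal{L}_m$ leaves intact the three pillars of that argument: (i) a linearized range invariance \eqref{rangeinvar_diff} with constant $c_r<1$; (ii) continuous invertibility of the frozen linearization $\vec{F}'(0,\uvec_0)$; (iii) the standard error recursion that drives the iterates under the stopping rule \eqref{nstar}.

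First I would construct $r(\vec{\nlc},\uvec)$ explicitly. Since the only nonlinearity is the quadratic term $\nlc_m\mathcal{B}_m(\uvec,\uvec)$ and we freeze at $\nlc=0$, the difference $\vec{F}(\vec{\nlc},\uvec)-\vec{F}(0,\uvec_0)$ splits on the model side into the linear piece $\mathcal{L}_m(u_m-u_{0,m})$ plus a purely quadratic piece $\nlc_m\mathcal{B}_m(\uvec,\uvec)$, while the observation side is already linear. Setting $r(\vec{\nlc},\uvec)=(\vec{\nlc},\uvec-\uvec_0+\vec{w})$ with $\mathcal{L}_m w_m=-\nlc_m\mathcal{B}_m(\uvec,\uvec)$ for each $m\in\mathbb{N}$ (uniformly solvable $W\to V$ since the extra cubic-in-$m$ symbol $\imath m^3\omega^3\tau\mathcal{M}_0$ only reinforces coercivity at large $m$) makes \eqref{rangeinvar_diff} hold by direct substitution, and reduces \eqref{rid_cr} to a bilinear estimate on $\mathcal{B}_m$ in the algebra $H^2(\Omega)$ (valid for $d\leq3$) --- yielding $c_r<1$ upon shrinking $U$ around $(0,\uvec_0)$.

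Pillar (ii) is provided by Theorem~\ref{thm:uniqueness-stability_lin}: the injectivity is topology-independent on the coefficient side and transfers to the Hilbert setting \eqref{eqn:DBQVW} without further work, while the penalty $P\vec{\nlc}$ in \eqref{frozenNewtonHilbert} annihilates the null direction $\nlc_m\equiv\mathrm{const}$ that the artificial $m$-dependence of $\vec{\nlc}$ introduces. With (i) and (ii) in hand, the induction of \cite[Theorem 3.3]{nonlinearity_imaging_2d} yields an error recursion of the shape $\|x_{n+1}^\varepsilon-x^\dagger\|\lesssim c_r\|x_n^\varepsilon-x^\dagger\|+C\,\varepsilon/\sqrt{\alpha_n}$; unrolling this under \eqref{nstar} with a sufficiently small radius $\rho$ keeps the iterates in $\mathcal{B}_\rho(x^\dagger)$ and forces convergence in $Q^{\mathbb{N}}\times V^{\mathbb{N}}$.

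The step I expect to be the main obstacle is verifying the range invariance uniformly in $m$, that is, bookkeeping the $m^3$ growth introduced by $\tau u_{ttt}$ against the weights implicit in the $V^{\mathbb{N}}$, $W^{\mathbb{N}}$ norms. At large $m$ the new term dominates and in fact improves the mapping properties of $\mathcal{L}_m^{-1}$; the delicate range is at moderate $m$, where one must verify that the components $b_m^{j,k}$ of $w_m$ decay like the inverse symbol $1/(\vartheta(o_m)+\Theta(o_m)\lambda_j)$ computed in Section~\ref{sec:uniqueness-lin} and combine this with the $H^2$ bilinear bound on $\mathcal{B}_m$ to close \eqref{rid_cr} with a constant strictly below one.
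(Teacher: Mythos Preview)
Your reduction to the framework of \cite[Theorem 3.3]{nonlinearity_imaging_2d} matches exactly what the paper does --- indeed the paper's entire proof is the one-line remark that the argument ``can be literally taken from \cite[Theorem 3.3]{nonlinearity_imaging_2d} by just adding the $\tau u_{ttt}$ term.'' So pillars (ii) and (iii) are fine in outline.

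There is, however, a genuine gap in your construction of $r$ for pillar (i). With $r(\vec{\nlc},\uvec)=(\vec{\nlc},\uvec-\uvec_0+\vec{w})$ and $\mathcal{L}_m w_m$ chosen to match the model component, the \emph{observation} component of $\vec{F}'(0,\uvec_0)[r]$ becomes $\text{tr}_\Sigma(u_m-u_{0,m}+w_m)$, while the right-hand side of \eqref{rangeinvar_diff} requires $\text{tr}_\Sigma(u_m-u_{0,m})$. These agree only if $\text{tr}_\Sigma w_m=0$, which your equation for $w_m$ does not enforce and will generically fail. (Your formula $\mathcal{L}_m w_m=-\nlc_m\mathcal{B}_m(\uvec,\uvec)$ is also off by a $\mathcal{B}_m(\uvec_0,\uvec_0)$ term, but that is secondary.) Trying to impose $\text{tr}_\Sigma w_m=0$ in addition overdetermines the boundary value problem.

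The device that makes range invariance work --- and the very reason the paper extends $\nlc$ to an $m$-dependent $\vec{\nlc}$ --- is to absorb the quadratic discrepancy into the \emph{coefficient} slot, not the state slot. Since $\mathcal{B}_m(\uvec_0,\uvec_0)=\phi^2\mathfrak{B}_m$ is nowhere vanishing by \eqref{u0separable}, one takes
\[
r(\vec{\nlc},\uvec)=\Bigl(\bigl(\nlc_m\,\tfrac{\mathcal{B}_m(\uvec,\uvec)}{\phi^2\mathfrak{B}_m}\bigr)_{m\in\N},\ \uvec-\uvec_0\Bigr),
\]
which satisfies both components of \eqref{rangeinvar_diff} exactly, leaving the observation side untouched. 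The closeness estimate \eqref{rid_cr} then reduces to a multiplier bound on $\mathcal{B}_m(\uvec,\uvec)/(\phi^2\mathfrak{B}_m)-1$ in $Q=L^2(\Omega)$; this is where the algebra property of $V=H^2(\Omega)$ and the hypothesis $\phi\neq0$ a.e.\ actually enter, and shrinking $U$ drives $c_r<1$. The penalty term $\|P\vec{\nlc}\|^2$ in \eqref{frozenNewtonHilbert} then restores $m$-independence of $\nlc$ in the limit; it is not there to kill a null direction of $\vec{F}'(0,\uvec_0)$ as you wrote, but to counteract the artificial enlargement of the parameter space that this construction necessitated.
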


\section*{Acknowledgment}
This research was funded in part by the Austrian Science Fund (FWF) 
[10.55776/P36318]. 
\revision{The author wishes to thank the reviewers for their careful reading of the manuscript and their detailed reports with valuable comments and suggestions that have led to an improved version of the paper.
}

\begin{thebibliography}{10}

\bibitem{AcostaUhlmannZhai2021}
Sebastian Acosta, Gunther Uhlmann, and Jian Zhai.
\newblock Nonlinear ultrasound imaging modeled by a {W}estervelt equation.
\newblock {\em SIAM Journal on Applied Mathematics}, 82(2):408--426, 2022.

\bibitem{Aronszajn:1957}
N.~Aronszajn.
\newblock A unique continuation theorem for solutions of elliptic partial
  differential equations or inequalities of second order.
\newblock {\em J. Math. Pures Appl. (9)}, 36:235--249, 1957.

\bibitem{nonlinparam1}
Robert~T. Beyer.
\newblock {{Parameter of nonlinearity in fluids}}.
\newblock {\em The Journal of the Acoustical Society of America},
  32(6):719--721, 07 2005.

\bibitem{BongartiCharoenphonLasiecka20}
Marcelo Bongarti, Sutthirut Charoenphon, and Irena Lasiecka.
\newblock Singular thermal relaxation limit for the {M}oore-{G}ibson-{T}hompson
  equation arising in propagation of acoustic waves.
\newblock In {\em Semigroups of operators---theory and applications}, volume
  325 of {\em Springer Proc. Math. Stat.}, pages 147--182. Springer, Cham,
  2020.

\bibitem{bucci2020regularity}
Francesca Bucci and Luciano Pandolfi.
\newblock On the regularity of solutions to the {M}oore--{G}ibson--{T}hompson
  equation: a perspective via wave equations with memory.
\newblock {\em Journal of Evolution Equations}, 20(3):837--867, 2020.

\bibitem{burns2000nonlinear}
Peter~N Burns, David~Hope Simpson, and Michalakis~A Averkiou.
\newblock Nonlinear imaging.
\newblock {\em Ultrasound in Medicine \& Biology}, 26:S19--S22, 2000.

\bibitem{chen2019nonexistence}
Wenhui Chen and Alessandro Palmieri.
\newblock Nonexistence of global solutions for the semilinear
  {M}oore--{G}ibson--{T}hompson equation in the conservative case.
\newblock {\em Discrete and Continuous Dynamical Systems}, 40(9):5513--5540,
  2020.

\bibitem{kWave_nonlinear}
Ben Cox, Barbara Kaltenbacher, Vanja Nikoli\'{c}, and Felix Lucka.
\newblock {Existence of solutions to k-Wave models of nonlinear ultrasound
  propagation in biological tissue}.
\newblock {\em Studies in Applied Mathematics}, page e12771, 2024.
\newblock arXiv:2405.20894 [math.AP].

\bibitem{DellOroPata}
Filippo Dell$'$Oro and Vittorino Pata.
\newblock On the {M}oore--{G}ibson--{T}hompson equation and its relation to
  linear viscoelasticity.
\newblock {\em Applied Mathematics \& Optimization}, 76(3):641--655, 2017.

\bibitem{duck2002nonlinear}
Francis~A Duck.
\newblock {Nonlinear acoustics in diagnostic ultrasound}.
\newblock {\em Ultrasound in medicine \& biology}, 28(1):1--18, 2002.

\bibitem{Eptaminitakis:Stefanov:2023}
Nikolas Eptaminitakis and Plamen Stefanov.
\newblock Weakly nonlinear geometric optics for the {W}estervelt equation and
  recovery of the nonlinearity.
\newblock {\em SIAM Journal on Mathematical Analysis}, 56(1):801--819, 2024.

\bibitem{nonlinparam3}
Woon~Siong Gan.
\newblock {\em {B/A nonlinear parameter acoustical imaging}}, pages 37--48.
\newblock Springer Singapore, Singapore, 2021.

\bibitem{JiangLiPauronYamamoto2023}
Daijun Jiang, Zhiyuan Li, Matthieu Pauron, and Masahiro Yamamoto.
\newblock Uniqueness for fractional nonsymmetric diffusion equations and an
  application to an inverse source problem.
\newblock {\em Mathematical Methods in the Applied Sciences}, 46(2):2275--2287,
  2023.

\bibitem{JordanMaxwellCattaneo09}
Pedro~M. Jordan.
\newblock Nonlinear acoustic phenomena in viscous thermally relaxing fluids:
  Shock bifurcation and the emergence of diffusive solitons.
\newblock Lecture at The 9th International Conf. on Theoretical and
  Computational Acoustics (ICTCA 2009). Dresden, Germany, 9 September 2009.

\bibitem{JordanMaxwellCattaneo14}
Pedro~M. Jordan.
\newblock Second-sound phenomena in inviscid, thermally relaxing gases.
\newblock {\em Discrete \& Continuous Dynamical Systems-Series B}, 19(7), 2014.

\bibitem{periodicWestervelt}
Barbara Kaltenbacher.
\newblock Periodic solutions and multiharmonic expansions for the {W}estervelt
  equation.
\newblock {\em Evolution Equations and Control Theory EECT}, 10:229--247, 2021.

\bibitem{nonlinearity_imaging_JMGT}
Barbara Kaltenbacher.
\newblock Identifiability of some space dependent coefficients in a wave
  equation of nonlinear acoustics.
\newblock {\em Evolution Equations and Control Theory}, 2023.
\newblock see also arXiv:2305.04110 [math.AP].

\bibitem{periodicJMGT}
Barbara Kaltenbacher.
\newblock {Well-posedness of the time-periodic Jordan-Moore-Gibson-Thompson
  equation}.
\newblock {\em Nonlinear Analysis: Real World Applications}, 2024.
\newblock to appear; see also arXiv:2409.05355 [math.AP].

\bibitem{KLM12_MooreGibson}
Barbara Kaltenbacher, Irena Lasiecka, and Richard Marchand.
\newblock Wellposedness and exponential decay rates for the
  {M}oore--{G}ibson--{T}hompson equation arising in high intensity ultrasound.
\newblock {\em Control and Cybernetics}, 40:971--988, 2011.

\bibitem{KLP12_JordanMooreGibson}
Barbara Kaltenbacher, Irena Lasiecka, and Maria~K. Pospieszalska.
\newblock Well-posedness and exponential decay of the energy in the nonlinear
  {J}ordan--{M}oore--{G}ibson--{T}hompson equation arising in high intensity
  ultrasound.
\newblock {\em Mathematical Models and Methods in Applied Sciences},
  22(11):1250035, 2012.

\bibitem{JMGT}
Barbara Kaltenbacher and Vanja Nikoli\'{c}.
\newblock {On the Jordan-Moore-Gibson-Thompson equation: well-posedness with
  quadratic gradient nonlinearity and singular limit for vanishing relaxation
  time}.
\newblock {\em Mathematical Models and Methods in the Applied Sciences M3AS},
  29:2523--2556, 2019.
\newblock arxiv:1901.02795 [math.AP].

\bibitem{JMGT_Neumann}
Barbara Kaltenbacher and Vanja Nikoli\'{c}.
\newblock {Vanishing relaxation time limit of the
  Jordan--Moore--Gibson--Thompson wave equation with Neumann and absorbing
  boundary conditions}.
\newblock {\em Pure and Applied Functional Analysis}, 5:1--26, 2020.
\newblock special issue dedicated to Irena Lasiecka; arXiv:1902.10606.

\bibitem{nonlinearity_imaging_Westervelt}
Barbara Kaltenbacher and William Rundell.
\newblock On the identification of the nonlinearity parameter in the
  {W}estervelt equation from boundary measurements.
\newblock {\em Inverse Problems \& Imaging}, 15:865--891, 2021.

\bibitem{nonlinearity_imaging_fracWest}
Barbara Kaltenbacher and William Rundell.
\newblock On an inverse problem of nonlinear imaging with fractional damping.
\newblock {\em Mathematics of Computation}, 91:245--276, 2022.
\newblock see also arXiv:2103.08965 [math.AP].

\bibitem{nonlinearity_imaging_2d}
Barbara Kaltenbacher and William Rundell.
\newblock Nonlinearity parameter imaging in the frequency domain.
\newblock {\em Inverse Problems and Imaging}, 18:388--405, 2023.
\newblock see also arXiv:2303.09796 [math.NA].

\bibitem{FSI}
Barbara Kaltenbacher and Amjad Tuffaha.
\newblock Well-posedness of a nonlinear acoustics - structure interaction
  model.
\newblock {\em Mathematical Models and Methods in the Applied Sciences M3AS},
  34(14):2611--2646, 2024.
\newblock see also arXiv:2308.11274 [math.AN].

\bibitem{KenigPonceVega1993}
Carlos~E. Kenig, Gustavo Ponce, and Luis Vega.
\newblock Well-posedness and scattering results for the generalized
  {K}orteweg-de {V}ries equation via the contraction principle.
\newblock {\em Comm. Pure Appl. Math.}, 46(4):527--620, 1993.

\bibitem{KurylevLassasUhlmann2018}
Yaroslav Kurylev, Matti Lassas, and Gunther Uhlmann.
\newblock Inverse problems for {L}orentzian manifolds and non-linear hyperbolic
  equations.
\newblock {\em Inventiones mathematicae}, 212:781--857, 2018.

\bibitem{LiuTriggiani13}
Shitao Liu and Roberto Triggiani.
\newblock An inverse problem for a third order {PDE} arising in high-intensity
  ultrasound: Global uniqueness and stability by one boundary measurement.
\newblock {\em Journal of Inverse and Ill-Posed Problems}, 21(6):825--869,
  2013.

\bibitem{MarchandMcDevittTriggiani12}
Richard Marchand, Timothy McDevitt, and Roberto Triggiani.
\newblock An abstract semigroup approach to the third-order
  {M}oore--{G}ibson--{T}hompson partial differential equation arising in
  high-intensity ultrasound: structural decomposition, spectral analysis,
  exponential stability.
\newblock {\em Mathematical Methods in the Applied Sciences}, pages 1896--1929,
  2012.

\bibitem{nonlinparam2}
K.~A. Naugol’nykh, L.~A. Ostrovsky, E.~A. Zabolotskaya, and M.~A. Breazeale.
\newblock {Nonlinear acoustics}.
\newblock {\em The Journal of the Acoustical Society of America},
  99(4):1815--1815, 04 1996.

\bibitem{NikolicSaidHouari21a}
Vanja Nikoli\'{c} and Belkacem Said-Houari.
\newblock Asymptotic behavior of nonlinear sound waves in inviscid media with
  thermal and molecular relaxation.
\newblock {\em Nonlinear Anal. Real World Appl.}, 62:Paper No. 103384, 38,
  2021.

\bibitem{NikolicSaidHouari21b}
Vanja Nikoli\'{c} and Belkacem Said-Houari.
\newblock On the {J}ordan-{M}oore-{G}ibson-{T}hompson wave equation in
  hereditary fluids with quadratic gradient nonlinearity.
\newblock {\em J. Math. Fluid Mech.}, 23(1):Paper No. 3, 24, 2021.

\bibitem{PellicerSolaMorales}
Marta Pellicer and Joan Sol\'a-Morales.
\newblock Optimal scalar products in the {M}oore-{G}ibson-{T}hompson equation.
\newblock {\em Evolution Equations \& Control Theory}, 8(1):203--220, 2019.

\bibitem{racke2020global}
Reinhard Racke and Belkacem Said-Houari.
\newblock Global well-posedness of the {C}auchy problem for the 3{D}
  {J}ordan--{M}oore--{G}ibson--{T}hompson equation.
\newblock {\em Communications in Contemporary Mathematics}, 23(07):2050069,
  2021.

\bibitem{periodicWest_2}
Benjamin Rainer and Barbara Kaltenbacher.
\newblock {On the nonlinear periodic Westervelt equation}.
\newblock {\em ESAIM: Mathematical Modelling and Numerical Analysis}, 2025.
\newblock to appear; see also arXiv:2407.17043 [math.AP].

\bibitem{Tataru:1995}
Daniel Tataru.
\newblock Unique continuation for solutions to {PDE}'s; between
  {H{\"o}rmander}'s theorem and {Holmgren}'s theorem.
\newblock {\em Commun. Partial Differ. Equations}, 20(5-6):855--884, 1995.

\bibitem{Tolsa:2023}
Xavier Tolsa.
\newblock Unique continuation at the boundary for harmonic functions in
  {{\(C^1\)}} domains and {Lipschitz} domains with small constant.
\newblock {\em Commun. Pure Appl. Math.}, 76(2):305--336, 2023.

\bibitem{nonlinearityimaging}
Masahiro Yamamoto and Barbara Kaltenbacher.
\newblock An inverse source problem related to acoustic nonlinearity parameter
  imaging.
\newblock In Barbara Kaltenbacher, Anne Wald, and Thomas Schuster, editors,
  {\em Time-dependent Problems in Imaging and Parameter Identification}.
  Springer, New York, 2021.

\bibitem{ZHANG20011359}
Dong Zhang, Xiu fen Gong, and Xi~Chen.
\newblock {Experimental imaging of the acoustic nonlinearity parameter B/A for
  biological tissues via a parametric array}.
\newblock {\em Ultrasound in Medicine \& Biology}, 27(10):1359--1365, 2001.

\end{thebibliography}

\end{document}